\newcommand{\rt}{\rightarrow}
\newcommand{\lrt}{\longrightarrow}
\newcommand{\st}{\stackrel}
\newcommand{\al}{\alpha}
\newcommand{\La}{\Lambda}
\newcommand{\Ga}{\Gamma}
\newcommand{\lan}{\langle}
\newcommand{\ran}{\rangle}
\newcommand{\C}{\mathbb{C} }
\newcommand{\D}{\mathbb{D} }
\newcommand{\K}{\mathbb{K} }
\newcommand{\Z}{\mathbb{Z} }
\newcommand{\R}{\mathbb{R}}
\newcommand{\CA}{\mathcal{A} }
\newcommand{\CC}{\mathcal{C} }
\newcommand{\CL}{\mathcal{L} }
\newcommand{\CM}{\mathcal{M} }
\newcommand{\CQ}{\mathcal{Q} }
\newcommand{\CS}{\mathcal{S} }
\newcommand{\CT}{\mathcal{T} }
\newcommand{\CX}{\mathcal{X} }
\newcommand{\CY}{\mathcal{Y} }
\newcommand{\CB}{\mathcal{B}}
\newcommand{\X}{\mathbf{X}}
\newcommand{\Y}{\mathbf{Y}}
\newcommand{\A}{\mathbf{A}}
\newcommand{\Mod}{{\rm{Mod\mbox{-}}}}
\newcommand{\mmod}{{\rm{{mod\mbox{-}}}}}
\newcommand{\Fmod}{\rm mod}
\newcommand{\FMod}{\rm Mod}
\newcommand{\Inj}{{\rm{Inj}\mbox{-}}}
\newcommand{\inj}{{\rm{inj}\mbox{-}}}
\newcommand{\Prj}{{\rm{Prj}\mbox{-}}}
\newcommand{\prj}{{\rm{prj}\mbox{-}}}
\newcommand{\GPrj}{{\GP}\mbox{-}}
\newcommand{\Gprj}{{\Gp\mbox{-}}}
\newcommand{\GInj}{{\GI \mbox{-}}}
\newcommand{\Ginj}{{\Gi \mbox{-}}}
\newcommand{\Sum}{{\rm Sum}\mbox{-}}
\newcommand{\op}{{\rm{op}}}
\newcommand{\sg}{{\rm{sg}}}
\newcommand{\add}{{\rm{add}\mbox{-}}}
\newcommand{\Add}{{\rm{Add}\mbox{-}}}
\newcommand{\ac}{{\rm{ac}}}
\newcommand{\id}{{\rm{id}}}
\newcommand{\GP}{{\mathcal{GP}}}
\newcommand{\Gp}{{\mathcal{G}p}}
\newcommand{\GI}{{\mathcal{GI}}}
\newcommand{\Gi}{{\mathcal{G}i}}
\newcommand{\bb}{\rm b}
\newcommand{\Hom}{{\rm{Hom}}}
\newcommand{\Ext}{{\rm{Ext}}}
\newcommand{\End}{{\rm{End}}}
\newtheorem{theorem}{Theorem}[section]
\newtheorem{corollary}[theorem]{Corollary}
\newtheorem{lemma}[theorem]{Lemma}
\newtheorem{proposition}[theorem]{Proposition}
\theoremstyle{definition}
\newtheorem{example}[theorem]{Example}
\newtheorem{remark}[theorem]{Remark}
\newtheorem{setup}[theorem]{Setup}
\theoremstyle{plain}
\theoremstyle{definition}
\numberwithin{equation}{section}
\begin{document}

\title[Relative derived categories]{On relative derived categories}

\author[Asadollahi, Bahiraei, Hafezi, Vahed]{J. Asadollahi, P. Bahiraei, R. Hafezi and R. Vahed}

\address{Department of Mathematics, University of Isfahan, P.O.Box: 81746-73441, Isfahan, Iran and School of Mathematics, Institute for Research in Fundamental Science (IPM), P.O.Box: 19395-5746, Tehran, Iran }
\email{asadollahi@ipm.ir and asadollahi@sci.ui.ac.ir}

\address{Department of Mathematics, University of Isfahan, P.O.Box: 81746-73441, Isfahan, Iran}
\email{pbahiraei.math@sci.ui.ac.ir}

\address{School of Mathematics, Institute for Research in Fundamental Science (IPM), P.O.Box: 19395-5746, Tehran, Iran }
\email{hafezi@ipm.ir}
\email{vahed@ipm.ir}

\subjclass[2010]{18E30, 18E35, 18G20, 16E65, 16G10, 16P10}

\keywords{Relative derived category; functor category; AR-triangle; recollement; reflection functor.}

\thanks{This research was in part supported by a grant from IPM (No: 93130216). }

\begin{abstract}
The paper is devoted to study some of the questions arises naturally in connection to the notion of relative derived categories. In particular, we study invariants of recollements involving relative derived categories, generalise two results of Happel by proving the existence of AR-triangles in Gorenstein derived categories, provide situations for which relative derived categories with respect to Gorenstein projective and Gorenstein injective modules are equivalent and finally study relations between the Gorenstein derived category of a quiver and its image under a reflection functor. Some interesting applications are provided.
\end{abstract}

\maketitle

\tableofcontents

\section{Introduction}
Derived categories were invented by Grothendieck-Verdier \cite{Ve77} in the early sixties, simply to make quasi-isomorphisms become isomorphisms. Nowadays, derived categories are widely used in algebraic geometry, commutative algebra and also representation theory of algebras.

On the other hand, there has been an approach in homological algebra, going back to Eilenberg and Moore in their 1965 AMS Memoir \cite{EM} and Mac Lane \cite{Ma}, that replaces projective modules by relative projective modules. With this approach modules other than projectives are allowed in resolutions, and hence a relative version of homological algebra was grown up. The most important case, that goes back to Auslander and Bridger's 1964 AMS Memoir \cite{AB}, is to treat a class of relative projective modules known as Gorenstein projective modules.

The importance and applications of the notion of derived categories in many different subjects provide a natural motivation to study the relative version of derived categories. Buan \cite{Bu} studied relative derived categories by localizing relative quasi-isomorphisms. He applied these categories to generalize Happel's result on derived equivalences induced by tilting modules to the relative setting. Moreover, Gao and Zhang \cite{GZ} used Gorenstein homological algebra to study Gorenstein derived categories.  This notion then was studied more in \cite{AHV}.

This paper should be considered as a continuation of \cite{GZ} and \cite{AHV}, as further attempts to achieve more properties of relative derived categories, specially with respect to the class of Gorenstein projectives. The outcome is that this relative setting has nice properties one could expect and deserve more attentions. In fact, beside other applications, one hope is that such theories could help in understanding the Gorenstein Symmetry Conjecture, when one apply the theory to the module category of an artin algebra.

Let $\CA$ be an abelian category and $\CX$ be a subcategory of $\CA$. A complex ${\bf A}$ in $\CA$ is called $\CX$-acyclic if the induced complex $\Hom_{\CA}(X, \A)$ is acyclic for every $X \in \CX$. The Gorenstein derived category of $\CA$, denoted $\D_{\GP}(\CA)$, is by definition the Verdier quotient of the homotopy category of $\CA$, $\K(\CA)$, modulo the thick subcategory $\K_{\GP \mbox{-} {\rm ac}}(\CA)$ of $\GP(\CA)$-acyclic complexes, where $\GP$ denotes the class of Gorenstein projective objects. Some of the basic properties of $\D_{\GP}(\CA)$ are investigated in \cite{GZ}. In a similar way, one can define the $\CX$-relative derived category of $\CA$, denoted by $\D_{\CX}(\CA)$.

There are several open questions transferred naturally from (absolute) derived settings to relative settings. See \cite[Appendix 3]{Ch3} for a (not complete) list of open problems. In this paper we discuss some of such questions. Let us be more precise.

Section 2 is devoted to the preliminaries. In Section 3, we show that when $\La$ is an artin algebra, $\D_{\CX}(\Mod \La)$, for certain subcategories $\CX$ of $\Mod \La$, has a nice interpretation as the (absolute) derived category of a functor category, see Theorem \ref{D(ModS)}.

Recollements are important in algebraic geometry and representation theory, see for example \cite{BBD, Kon}. They were introduced by Beilinson, Bernstein and Deligne \cite{BBD} with the idea that one triangulated category can be considered as being `glued together' from two others.
Study of invariants of recollements also have been the subject of several researches. For example, Wiedemann \cite[Lemma 2.1]{W} proved that if we have a $\D^{\bb}(\mmod )$ level recollement of finite dimensional algebras over a field, writing $A$ in terms of $B$ and $C$, i.e.
\[ \xymatrix@C-0.5pc@R-0.5pc{ \D^{\bb}(\mmod B) \ar[rr] && \D^{\bb}(\mmod A) \ar[rr] \ar@/^0.75pc/[ll] \ar@/_0.75pc/[ll] && \D^{\bb}(\mmod C)\ar@/^0.75pc/[ll] \ar@/_0.75pc/[ll] }\]
then ${\rm gl. dim}(A)< \infty$ if and only if ${\rm gl. dim}(B)< \infty $ and ${\rm gl. dim}(C)< \infty$. Similar result for the finitistic dimension is proved by Happel \cite[3.3]{H}. In these cases, we say that the finiteness of global dimension and also finiteness of finitistic dimension are invariants of recollements. There is an example showing that being of finite representation type is not an invariant of recollements, see Section 4 below. As an application of our results, see Theorem \ref{MainThm}, we show that if we have a $\D^{\bb}_{\Gp}(\mmod )$ level recollement of virtually Gorenstein algebras, then being Gorenstein as well as being of finite Cohen-Macaulay type, are invariants of recollements.

The notion of Auslander-Reiten triangles, AR-triangles for short, is introduced by Happel \cite{Hap}. He established their existence in the bounded derived category of finitely generated $\La$-modules, $\D^{\bb}(\mmod \La)$, for an artin algebra $\La$. A natural attempt is to prove the existence theorem of AR-triangles for the bounded Gorenstein derived category of $\La$, $\D^{\bb}_{\GP}(\mmod \La)$. This question was first considered by Gao \cite{G12}, where she proved the existence of AR-triangles for any finite dimensional Gorenstein algebras of finite Cohen-Macaulay type. In Section 5, we extend this result to Gorenstein algebras. Furthermore, in Theorem \ref{AR-triangels}, we show that when $\La$ is a virtually Gorenstein algebra, then $\D^{\bb}_{\Gp}(\mmod \La)$ has Auslander-Reiten triangles if and only if $\La$ is Gorenstein. This result should be compared with \cite[Corollay 1.5]{Hap2}.

In Section 6, we study situations for which relative derived categories with respect to Gorenstein projective and Gorenstein injective modules are equivalent. We show that such a balance property exists, i.e. $\D_{\Gp}^{\bb}(\mmod \La) \simeq \D_{\Gi}^{\bb}(\mmod \La)$, provided $\La$ is a virtually Gorenstein  algebra, see Theorem \ref{ThmSec6}.

The notion of reflection functors were introduced by Bernstein, Gel'fand, and Ponomarev \cite{BGP} to relate representations of two quivers. Then this notion has been extended by several authors, see \cite{BB} and \cite{HR}. In the last section of the paper, Corollary  \ref{AHV2}, we prove that if $\La$ is a Gorenstein algebra, then $\D^{\bb}_{\Gp}(\CQ)$ is equivalent to $\D^{\bb}_{\Gp}(\sigma_i \CQ)$ via reflection functors. This is a relative version of a result of Happel \cite[I. 5.7]{Hap}. Two direct corollaries are the equivalences of triangulated categories $\D_{\sg}(\CQ) \cong \D_{\sg}(\sigma_i\CQ)$ and $\underline{\Gp}\mbox{-}\CQ \simeq \underline{\Gp}\mbox{-}\sigma_i\CQ,$ where $\D_{\sg}(\CQ)$ denotes the singularity category of $\La\CQ$ and $\underline{\Gp}$ denotes the stable category of Gorenstein projective modules with respect to projectives.

Let us end this introduction by mentioning that Theorems \ref{AR-triangels} and \ref{MainThm} in this paper, are proved independently in \cite{Gao2}.

\section{Preliminaries}
Let $A$ be a ring, associative with identity and  $\Mod A$, resp. $\mmod A$, denote the category of all, resp. all finitely presented, $A$-modules.

For an $A$-module $M$, $\add M$ is the additive subcategory of $\Mod A$ consisting of all direct summands of finite direct sums of copies of $M$. For a class $\CM$  of $A$-modules, $\Sum \CM$, resp. ${\rm sum}\mbox{-} \CM$, denotes the subcategory of $\Mod A$ consisting of all, resp. all finite, direct sums of objects in $\CM$.

Let $\CA$ be an additive category. $\C(\CA)$ denotes the category of complexes over $\CA$. We write complexes cohomologically, so every object of  $\C(\CA)$ is of the form
$$ \cdots \lrt A^{n-1} \lrt A^n \lrt A^{n+1} \lrt \cdots.$$
$\K(\CA)$ denotes  the homotopy category of $\CA$. Moreover,  $\K^-(\CA)$, resp. $\K^{\bb}(\CA)$, denotes the full subcategory of $\K(\CA)$ formed by all bounded above, resp. bounded, complexes. As well, we let $\K^{-, \bb}(\CA)$ denote the full subcategory of $\K^{-}(\CA)$ consisting of all complexes ${\bf X}$ in which there is an integer $n=n_{\X}$ such that ${\rm H}^i(\X)=0$, for all $i \leq n$.

The derived category of $\CA$ will be denoted by $\D(\CA)$. We write $\D^{-}(\CA)$, resp $\D^{\bb}(\CA)$, for the full subcategory of $\D(\CA)$ consisting of all homologically bounded above, resp. homologically bounded, complexes.

Let $\CA$ be an abelian category and $\CX \subseteq \CA$ be a full additive subcategory of $\CA$ which is closed under direct summands. For every object $C$ of $\CA$, a morphism $f: X \lrt C$ is called a right $\CX$-approximation of $C$, if $X \in \CX$ and for every $X' \in \CX$ the induced map $\Hom_{\CA}(X' ,X) \lrt \Hom_{\CA}(X' , C)$ is surjective. The subcategory $\CX$ of $\CA$ is called contravariantly finite if every object in $\CA$ admits a right $\CX$-approximation.

\s Let $\CX$ be an additive category. A complex $\X \in \C(\CX)$ is called $\CX$-totally acyclic if for every $X \in \CX$, the induced complexes $\Hom(X, \X)$ and $\Hom(\X, X)$ are acyclic.
Let $\CA$ be an abelian category having enough projective, resp. injective, objects and $\CX= \Prj \CA$, resp. $\CX= \Inj \CA$, be the class of projectives, resp. injectives. In this case, an $\CX$-totally acyclic complex is called totally acyclic complex of projectives, resp. totally acyclic complex of injectives. An object $G$ in $\CA$ is called Gorenstein projective, resp. Gorenstein injective, if $G$ is a  syzygy of a totally acyclic complex of projectives, resp. totally acyclic complex of injectives. We denote the class of all Gorenstein projective, resp. Gorenstein injective, objects in $\CA$ by $\GPrj \CA$, resp. $\GInj \CA$. In case $\CA= \Mod A$, we abbreviate the notations to $\GPrj A$ and $\GInj A$. We set $\Gprj A= \GPrj A \cap \mmod A$ and $\Ginj A= \GInj A \cap \mmod A$.

\s {\sc Relative derived category.}  Let $\CA$ be an abelian category and $\CX$ be a subcategory of $\CA$. A complex ${\bf A}$ in $\CA$ is called $\CX$-acyclic (sometimes $\CX$-proper exact) if  the induced complex $\Hom_{\CA}(X, \A)$ is acyclic for every $X \in \CX$.
A chain map $f:{\bf A} \lrt {\bf B}$ in $\C(\CA)$ is called an $\CX$-quasi-isomorphism if  the induced chain map $\CA(X,f)$ is a quasi-isomorphism, for any $X \in \CX$.

The relative derived category of $\CA$, denoted by $\D^*_{\CX}(\CA)$, is  the Verdier quotient of the homotopy category $\K^*(\CA)$ with respect to  the thick triangulated subcategory $\K^*_{\CX\mbox{-}\ac}(\CA)$ of $\CX$-acyclic complexes, where $* \in \{{\rm blank},-,\bb\}$; see \cite[\S 2]{GZ} and \cite[\S 3]{Ch1}.

If $\CA$ has enough projective objects and $\CX = \GPrj \CA$, then $\D_{\CX}(\CA)$ is known as the Gorenstein derived category of $\CA$, denoted by $\D_{\GP}(\CA)$. The Gorenstein derived category was first introduced and studied by Gao and Zhang \cite{GZ}. In case $\CX=\Gprj \La$, where $\La$ is a ring, we write  $\K_{\Gp \mbox{-}\ac}(\mmod\La)$ for $\K_{\CX\mbox{-}\ac}(\CA)$ and $\D_{\Gp}(\mmod\La)$ for $\D_{\CX}(\CA)$.

\s{\sc Functor categories.}\label{FCat}
Let $\CC$ be a skeletally small additive category and $\CB$ be an additive category. The additive covariant, resp. contravariant,  functors from $\CC$ to $\CB$ together with the natural transformations and their compositions of functors form a category, known as the functor category $(\CC, \CB)$, resp.\ $(\CC^{\op}, \CB)$. We are mainly interested in the category $(\CC^{\op}, \CA b)$ of abelian group valued functors. This category inherits a notion of exactness from the category of abelian groups. A sequence $F' \st{u} \lrt F \st{v} \lrt F''$ of functors is called exact if for every $X \in \CC$,  the induced sequence $F'(X) \st{u_X}\lrt F(X) \st{v_X} \lrt F''(X)$ is an exact sequence of abelian groups. This, in particular, implies that $(\CC^{\op}, \CA b)$ is an abelian category.

By using Yoneda lemma one can see that, for every $C \in \CC$, the functor $\Hom_{\CC}(-, C)$ is a projective object in the functor category $(\CC^{\op}, \CA b)$. Moreover, every object  $F$ in $(\CC^{\op},\CA b)$ is covered by an epimorphism $ \coprod_{i}\Hom_{\CC}(-,C_i) \lrt F \lrt 0$, where $i$ runs through isomorphism classes of objects of $\CC$.  So  the category $(\CC^{\op},\CA b)$ has enough projective objects.

Following \cite{A}, we denote the category $( \CC^{\op}, \CA b)$ by $\FMod (\CC)$, which is called the category of modules on $\CC$.
A $\CC$-module $F$ is called finitely presented if there is an exact sequence
$$ \Hom_{\CC}(-, X) \lrt \Hom_{\CC}(-,Y) \lrt F \lrt 0$$
of $\CC$-modules, with $X$ and $Y$ in $\CC$. The category of all finitely presented $\CC$-modules is denoted by $\Fmod(\CC)$.

A morphism $C_2 \st{f}\rt C_1$ is called pseudokernel for a morphism $C_1 \st{g}\rt C_0$ if the sequence
$$ \Hom_{\CC}(-,C_2) \st{\Hom_{\CC}(-,f)} \lrt \Hom_{\CC}(-,C_1) \st{\Hom_{\CC}(-,g)} \lrt \Hom_{\CC}(-,C_0)$$
of functors is exact. It is known that $\CC$ has pseudokernels if and only if $\Fmod (\CC)$ is abelian.

\s {\sc Recollements.}
Let $\CT'$, $\CT$ and $\CT''$ be triangulated categories.
 A recollement of  $\CT$  with respect to  $\CT'$ and $\CT''$ is defined by six triangulated functors as follows
\[\xymatrix{\CT'\ar[rr]^{i_*=i_!}  && \CT \ar[rr]^{j^*=j^!} \ar@/^1pc/[ll]_{i^!} \ar@/_1pc/[ll]_{i^*} && \CT'' \ar@/^1pc/[ll]_{j_*} \ar@/_1pc/[ll]_{j_!} }\]
satisfying the following conditions:
\begin{itemize}
\item[$(i)$] $(i^*,i_*)$, $(i_!,i^!)$, $(j_!, j^!)$ and $(j^*,j_*)$ are adjoint pairs.
\item[$(ii)$] $i^!j_*=0$, and hence $j^!i_!=0$ and $i^*j_!=0$.
\item[$(iii)$] $i_*$, $j_*$ and $j_!$ are full embeddings.
\item[$(iv)$] for any object $T \in \CT$, there exist the following triangles
\[i_!i^!(T) \rt  T \rt j_*j^*(T) \rightsquigarrow \ \ \ \text{and} \ \ \ j_!j^!(T) \rt T \rt i_*i^*(T) \rightsquigarrow\]
in $\CT$.
\end{itemize}

\s {\sc Thick subcategories.}
Let $\CT$ be a triangulated category. A triangulated subcategory $\CL$ of $\CT$ is called thick, if it is closed under direct summands. Let $L$ be a set of objects in $\CT$. We denote by ${\rm thick}(L)$ the smallest thick subcategory of $\CT$ containing $L$. It is a known fact that, for a ring $R$, the subcategory ${\rm thick}(R)$ of $\D^{\bb}(\mmod R)$ is just $\K^{\bb}(\prj R)$.

\section{A description of relative derived categories}
Let $\La$ be an artin algebra. In this section, we show that $\D_{\CX}(\Mod \La)$, for certain subcategories $\CX$ of $\Mod \La$, has a description as the (absolute) derived category of a functor category.

It is proved in Corollary 3.6 of \cite{AHV} that if $M \in \mmod \La$, then there is an equivalence $\D^{\bb}_{\Add M}(\Mod \La) \simeq \D^{\bb}(\Mod \End_{\La}(M))$ of triangulated categories. In the following proposition we extend this equivalence to the set $\CS$ contained in $\mmod \La$. This result also generalizes Theorem 3.1 of \cite{G1}.

\begin{proposition}\label{D(ModS)}
Let $\La$ be an artin algebra and $\CS$ be a set contained in $\mmod \La$. Set $\CX := \Add \CS$. Then there is an equivalence $$\D_{\CX}(\Mod \La) \simeq \D(\FMod (\CS))$$ of triangulated categories.
\end{proposition}

\begin{proof}
Set $\CA= \Mod \La$. Let $\lan \CS \ran$ be the smallest full triangulated subcategory of $\K(\CA)$ that contains $\CS$ and is closed under coproducts. Let $\X$ be a complex in ${}^\perp \K_{\CX \mbox{-} \ac}(\CA)$. Then $\Hom_{\K(\CA)}(f, \X)$ is bijection for all $\CX$-quasi-isomorphisms $f$. Thus, the quotient functor $\K(\CA) \lrt \D_{\CX}(\CA)$ induces a bijection $\Hom_{\K(\CA)}(\X, \Y) \cong \Hom_{\D_{\CX}(\CA)}(\X, \Y)$, where $\X, \Y \in {}^\perp \K_{\CX\mbox{-}\ac}(\CA)$; see \cite[Proposition 5-3]{Ve77} and \cite[Lemma 4.8.1]{Kra3}. Since $\lan \CS \ran \subseteq {}^{\perp}\K_{\CX\mbox{-}\ac}(\CA)$, we may deduce that the composition
\[\varphi: \lan \CS \ran \hookrightarrow \K(\CA) \lrt \D_{\CX}(\CA)\]
of functors is fully faithful. Since $\lan \CS \ran$ is compactly generated, by \cite[Theorem 4.1]{N}, the inclusion $\lan \CS \ran \hookrightarrow \K(\CA)$ has a right adjoint. Hence for every complex $\X \in \K(\CA)$, there exists an exact triangle
$\Y \lrt \X \lrt {\bf Z} \rightsquigarrow$, in which $\Y \in \lan \CS \ran$ and ${\bf Z} \in {\lan \CS \ran}^{\perp}$. But ${\lan \CS \ran}^{\perp} \subseteq \K_{\CX\mbox{-}\ac}(\CA)$. Hence in $\D_{\CX}(\CA)$, $\Y \cong\X$. This, in turn, implies that $\varphi$ is dense and so is an equivalence.

On the other hand, let $(-, \CS)$ denote the class of all functors $\Hom_{\La}(-,S)$, where $S \in \CS$. Since $\FMod(\CS)$ is an abelian category with a projective generator and exact coproducts, then  $\D(\FMod(\CS)) \simeq \lan (-,\CS) \ran $, see \cite{K07}. So we just need to prove that $\lan \CS \ran \simeq \lan (-,\CS) \ran $. To this end, consider the functor $\phi : \Sum \CS \lrt \Sum (-,\CS)$ given by $\phi ( \oplus_{i \in I}S_i)= \oplus_{i \in I}\Hom_{\La}(-,S_i)$. This functor is, in fact, an equivalence. Indeed, it follows from the following isomorphisms that $\phi$ is fully faithful.
\[\begin{array}{llll}
\Hom_{\La}(\oplus_{i\in I} S_i, \oplus_{j \in J}S_j) & \cong \prod_{i \in I} \Hom_{\La}(S_i, \oplus_{j \in J}S_j)
\\ & \cong \prod_{i \in I} \oplus_{j \in J} \Hom_{\La}(S_i, S_j)
\\ & \cong \prod_{i \in I} \oplus_{j \in J} \Hom (\Hom_{\La}(-, S_i), \Hom_{\La}(-, S_j))
\\ & \cong \Hom(\oplus_{i \in I} \Hom_{\La}(-,S_i), \oplus_{j \in J}\Hom_{\La}(-,S_j)).
\end{array}\]
Note that the second isomorphism follows from the fact that finitely generated $\La$-modules are compact objects in $\mmod \La$ and the last isomorphism follows from the same fact applying to  $\Hom_{\La}(-,S_i)$ in the category $\FMod (\CS)$. Moreover, $\phi$ is clearly dense and so is an equivalence of categories.

Naturally, $\phi$ can be extended to an equivalence of homotopy categories $$\bar{\phi}: \K(\Sum \CS) \lrt \K(\Sum (-,\CS)).$$
Observe that starting from any object of $\K(\Add \CS)$ we can get an object of $\K(\Sum \CS)$ by taking the direct sum with complexes of the form
$$\cdots \lrt 0 \lrt 0 \lrt S \st{\id}\lrt S \lrt 0 \lrt 0 \lrt \cdots,$$
 that are zero objects in the homotopy category. So $\K(\Sum \CS) \simeq \K(\Add \CS)$.  \\In a similar way, one can deduce that $\K(\Sum (-,\CS)) \simeq \K(\Prj \FMod(\CS))$. Consequently, we have an equivalence $\K(\CX) \simeq \K(\Prj \Fmod(\CS))$.

Now, our desired equivalence follows from  the following diagram
\[\xymatrix{ \K(\CX) \ar[r]^{\sim \ \ \ \ \ \ } \ar@{<-_{)}}[d] & \K(\Prj \FMod(\CS)) \ar@{<-_{)}}[d] \\ \lan \CS \ran  \ar[r]^{\sim \ \ } & \lan (-,\CS) \ran.}\]
Hence the proof is complete.
\end{proof}

Recall that an artin algebra $\La$ is called virtually Gorenstein, if $(\GPrj \La)^\perp = {}^\perp (\GInj \La)$, see \cite{BR}. Moreover, an artin algebra $\La$ is called of finite Cohen-Macaulay type (CM-finite, for short) provided there are only finitely many indecomposable finitely generated Gorenstein projective $\La$-modules, up to isomorphism, see \cite{BR} and \cite{Be2}.

\begin{corollary}
Let $\La$ be a virtually Gorenstein algebra of finite CM-type. Then $$\D_{\GP}(\Mod \La) \simeq \D (\FMod(\Gprj \La)).$$
\end{corollary}

\begin{proof}
By \cite[Theorem 4.10]{Be3}, $\GPrj \La= \Add \Gprj \La$. Therefore, Proposition \ref{D(ModS)} applies to conclude the result.
\end{proof}

\section{Invariants of recollements of Gorenstein derived categories}
Wiedemann \cite[Lemma 2.1]{W} proved that if we have a $\D^{\bb}(\mmod )$ level recollement of finite dimensional algebras over a field, writing $A$ in terms of $B$ and $C$,
then ${\rm gl. dim}(A)$ is finite if and only if ${\rm gl. dim}(B)$ and ${\rm gl. dim}(C)$ are finite. Similar result for the finitistic dimension is proved by Happel \cite[3.3]{H}. That is finiteness of global dimension and also finiteness of finitistic dimension are invariants of recollements.

In this section, we show that being CM-finite and also being Gorenstein are invariants of recollements of Gorenstein derived categories over certain artin algebras.  Note that, as the following examples show, being of finite representation type or being Gorenstein are not invariants of recollements of (absolute) derived categories.

\begin{example}
Let $k$ be an algebraically closed filed and $\La= k[x]/(x^n)$ with $n>5$.
It is proved in \cite[Corollary 7.7]{P} that for any ring $\La$ there is the following recollement
\[\xymatrix@C=0.5cm{ \D^{\bb}(\mmod \La) \ar[rrr]^{i_*}  &&& \D^{\bb}(\mmod T_2(\La)) \ar[rrr]^{j^*} \ar@/^1.5pc/[lll]_{i^!} \ar@/_1.5pc/[lll]_{i^*} &&& \D^{\bb}(\mmod \La),\ar@/^1.5pc/[lll]_{j_*} \ar@/_1.5pc/[lll]_{j_!} }\]
where $T_2(\La)= \left(\begin{array}{ll} \La & 0 \\ \La & \La \end{array} \right)$ is the $T_2$-extension of $\La$. By \cite[Example 4.17]{Be3}, we know that $T_2(\La)$ if of infinite CM-type, while it is known that $\La$ is of finite representation type.
\end{example}

\begin{example}
Let $A$ be a $k$-algebra, where $k$ is a filed, given by the quiver
\[\xymatrix{   e_1 \ar[r]^{\al} & e_2 \ar@(dr,ur)[]_{\beta}}\]
bound by the zero relations $\al \beta =0$ and $\beta^2=0$. Consider an idempotent $e_1$ of $A$. In view of \cite[Example 4.5]{PSS}, there is the following  recollement
\vspace{0.3cm}
\[\xymatrix@C=0.5cm{ \D^{\bb}(\mmod A/Ae_1A) \ar[rrr] &&& \D^{\bb}(\mmod A) \ar[rrr] \ar@/^1.5pc/[lll] \ar@/_1.5pc/[lll] &&& \D^{\bb}(\mmod k).\ar@/^1.5pc/[lll] \ar@/_1.5pc/[lll] }\]
\vspace{0.3cm}
\\It can easily checked that $A/Ae_1A$ is a $k$-algebra given by the quiver
\[\xymatrix{    e_2 \ar@(dr,ur)[]_{\beta}}\]
bound by the zero relation $\beta^2=0$. Therefore, $A/Ae_1A$, as well as $k$, is Gorenstein, while $A$ is not Gorenstein, see \cite[Example 4.3(2)]{Ch}.
\end{example}

\begin{setup}\label{setup}
Throughout this and also next section, we assume that the artin algebra $\La$ satisfies the following property
\begin{center}
$(\divideontimes)$ $\Gprj \La$ is a contravariantly finite subcategory of $\mmod \La$.
\end{center}
\end{setup}

\begin{remark}\label{contra}
Note that if $\La$ is a virtually Gorenstein algebra or is an artin algebra of finite CM-type, then it satisfies property $(\divideontimes)$, i.e. $\Gprj \La$ is a contravariantly finite subcategory of $\mmod \La$. This fact for virtually Gorenstein algebras is proved by Beligiannis \cite[Proposition 4.7]{Be3} and for CM-finite algebras is easy to see because in this case $\Gprj\La = \add G$, for some $G \in \Gprj \La$. So our results at least covers these two classes of algebras.

It is an open problem that weather CM-finite algebras are virtually Gorenstein, see \cite[Page 52]{Ch3}.
\end{remark}

Let $\K^{-, \Gp \bb}(\Gprj \La)$ denote the full subcategory of $\K^-(\Gprj \La)$ consisting of all complexes $\X$ such that there is an integer $n =n_{\X}$ such that  ${\rm H}^i(\Hom_{\La}(G,\X))=0$, for all $i \leq n$ and every $G \in \Gprj \La$.
\vspace{0.2cm}

\begin{lemma}\label{Rem}
Let $\La$ be an artin algebra and  $\X$ be a complex in $\K^{-,\Gp \bb}(\Gprj \La)$. Then  $\X$ lies in $\K^{\bb}(\Gprj \La)$ if and only if for every $\Y \in \K^{-,\Gp \bb}(\Gprj \La)$, $\Hom_{\K^{-,\Gp\bb}(\Gprj \La)}(\X, \Y[n])=0$, for large $n$.
\end{lemma}

\begin{proof}
If $\X$ is isomorphic to a bounded complex of finitely generated Gorenstein projective $\La$-module, then $\Hom(\X, \Y[i])=0$, for large $i$. Conversely, assume that $\X$ is not isomorphic to a complex in $\K^{\bb}(\Gprj \La)$. So, for small $i$, the kernel of the $i$-th differential of $\X$ is not finitely generated Gorenstein projective. Take $\Y$ to be the direct sum of Gorenstein projective resolutions of all simple $\La$-modules. Then $\Hom(\X, \Y[i])\neq 0$ for large $i$ and this is a contradiction.
\end{proof}

\begin{remark}
Let $\La$ be an artin algebra and $M \in \mmod \La$. A proper Gorenstein projective resolution of $M$ is a sequence
$$ \cdots \lrt G_n \lrt G_{n-1} \lrt \cdots \lrt G_0 \lrt M \lrt 0,$$
which is a $\Gp$-acyclic complex.

Let ${\bf G} \lrt M$ be a proper Gorenstein projective resolution of $M$. For every module $N \in \mmod \La$ and every integer $n$, the relative Ext group, denoted by $\Ext_{\mathcal{G}}(-,-)$, is defined by
$$ \Ext_{\mathcal{G}}^n(M, N) := {\rm H}^n(\Hom_{\La}({\bf G}, N)).$$

In view of Theorem 3.12 of \cite{GZ}, if $\La$ is a virtually Gorenstein algebra, then
$$\Hom_{\D^{\bb}_{\Gp}(\mmod \La)}(M , N[n]) \cong \Ext^n_{\mathcal G}(M,N),$$
for every $M,N \in \mmod \La$ and every $n \in \Z$.
\end{remark}

Now we are ready to prove our first result in this section.

\begin{theorem}\label{MainThm}
Let $A,B$ and $C$ be artin algebras satisfying property $(\divideontimes)$. Assume that $\D^{\bb}_{\Gp}(\mmod A)$ admits the following recollement
\[ \xymatrix@C=0.5cm{ \D^{\bb}_{\Gp}(\mmod B) \ar[rrr]^{i_*}  &&& \D^{\bb}_{\Gp}(\mmod A) \ar[rrr]^{j^*} \ar@/^1.5pc/[lll]_{i^!} \ar@/_1.5pc/[lll]_{i^*} &&& \D^{\bb}_{\Gp}(\mmod C).\ar@/^1.5pc/[lll]_{j_*} \ar@/_1.5pc/[lll]_{j_!} }\]
Then
\begin{itemize}
\item[$(i)$] $A$ is of finite CM-type if and only if $B$ and $C$ are so.
\item [$(ii)$] $A$ is Gorenstein if and only if $B$ and $C$ are so.
\end{itemize}
\end{theorem}

\begin{proof}
$(i)$ Observe that by our assumption, $\Gprj A$, $\Gprj B$ and $\Gprj C$ are contravariantly finite subcategories of $\mmod A$, $\mmod B$ and $\mmod C$, respectively. So, by \cite[Theorem 3.3]{AHV} the given recollement can be stated in the following form
\[ \xymatrix@C=0.5cm{ \K^{-, \Gp\bb}(\Gprj B) \ar[rrr]^{i_*}  &&& \K^{-,\Gp \bb}(\Gprj A) \ar[rrr]^{j^*} \ar@/^1.5pc/[lll]_{i^!} \ar@/_1.5pc/[lll]_{i^*} &&& \K^{-, \Gp\bb}(\Gprj C).\ar@/^1.5pc/[lll]_{j_*} \ar@/_1.5pc/[lll]_{j_!} }\]
\vspace{0.1cm}

Let $A$ be of finite CM-type. So there is an $A$-module $G$ such that $\Gprj A=\add G$. Observe that the functor $i_*$, resp. $j_!$, sends any finitely generated Gorenstein projective $B$-module $G'$, resp. $C$-module $G''$, to a bounded complex over $\Gprj A$. Indeed, for any $\Y \in \K^{-,\Gp \bb}(\Gprj A)$, $\Hom_{\K^{-,\Gp \bb}(\Gprj A)}(i_*(G'), \Y[n])$, resp. $\Hom_{\K^{-,\Gp \bb}(\Gprj A)}(j_!(G''),\Y[n])$, is isomorphic to \\$\Hom_{\K^{-,\Gp \bb}(\Gprj B)}(G', i^! \Y[n])$, resp. $\Hom_{\K^{-,\Gp \bb}(\Gprj C)}(G'', j_* \Y[n])$, and hence vanishes for large $n$. So Lemma \ref{Rem} implies that $i_*(G')$ and $j_!(G'')$ belong to $\K^{\bb}(\Gprj A)$.

Since $\add G$ generates $\K^{\bb}(\Gprj A)$, the natural isomorphism $i^*i_* \cong {\rm id}_{\K^{-,\Gp \bb}(\Gprj B)}$ implies that $\Gprj B = \add (\oplus_{n \in \Z}i^*(G)^n)$, where $i^*(G)^n$ denotes the $n$-th term of the complex $i^*(G)$. Note that $i^*$ has a right adjoint and so it can be easily checked, by using the above remark, that $i^*(G)$ is in $\K^{\bb}(\Gprj B)$. So the direct sum $\oplus_{n \in \Z} i_*(G)^n$ has only finitely many non-zero terms and hence $B$ is of finite CM-type.
Similarly, from the unit of the adjoint pair $(j_!, j^*)$ we obtain the
isomorphism ${\rm id}_{\K^{-,\Gp \bb}(\Gprj C)}\cong j^*j_!$ and so $\Gprj C = \add (\oplus_{n \in \Z} j^*(G)^n)$. Again since $j^*$ possesses a right adjoint, $j^*(G)$ is bounded. Hence, $C$ is of finite CM-type.

For the converse, let $G'$, resp. $G''$, be a $B$-module, resp. $C$-module, in which $ \Gprj B=\add G'$, resp. $\Gprj C=\add G''$. Then, $\K^{\bb}(\Gprj B)= {\rm thick}( G')$ and $\K^{\bb}(\Gprj C)={\rm thick }(G'')$.  Let $G$ be a finitely generated Gorenstein projective $A$-module. There exists the standard triangle
$$ j_!j^* (G) \rt G \rt i_*i^*(G) \rightsquigarrow,$$
induced from the above recollement. Since $j^*$ has a right adjoint, the above argument can be applied to show that $j^*(G)$ lies in $\K^{\bb}(\Gprj C)$. Hence the above triangle implies that $G$ belongs to ${\rm thick}( i_*(G') \oplus j_!(G''))$. So, $i_*(G') \oplus j_!(G'')$ generates the bounded homotopy category $\K^{\bb}(\Gprj A)$.

Therefore, every complex $\X \in \K^{\bb}(\Gprj A)$ is homotopy equivalent to a bounded complex $\Y$ with all terms in $\add (\bar{G'}\oplus \bar{G''})$, where $\bar{G'}= \oplus_{n \in \Z}i_*(G')^n$ and $\bar{G''}=\oplus_{n \in \Z}j_!(G'')^n$. Observe that as we saw above $i_*(G')$ and $j_!(G'')$ lie in $\K^{\bb}(\Gprj A)$. Thus, the direct sum $\oplus_{n \in \Z}i_*(G')^n$ as well as the direct sum $\oplus_{n \in \Z}j_!(G'')^n$ have only finitely many non-zero terms. Now, if we pick a finitely generated Gorenstein projective $A$-module $G$ and consider it as a stalk complex with $G$ in degree zero, then $G$ is homotopy equivalent  to a bounded complex $\Y$ with all terms in $\add (\bar{G'}\oplus \bar{G''})$. This, in turn, implies that $G$ should belong to $\add (\bar{G'}\oplus \bar{G''})$ and hence $A$ is of finite CM-type.

$(ii)$
Let $A$ be Gorenstein. Then $\D^{\bb}_{\Gp}(\mmod A) \simeq \K^{\bb}(\Gprj A)$.
Since $\D^{\bb}_{\Gp}(\mmod B)$ is fully embedded in $\D^{\bb}_{\Gp}(\mmod A)$, by Lemma \ref{Rem}, every complex in $\D^{\bb}_{\Gp}(\mmod B)$ is isomorphic to a bounded complex in $\K^{\bb}(\Gprj B)$. Therefore, by \cite[Theorem 1.18]{CFH}, $B$ is Gorenstein. The same argument can be applied to see that $C$ is also Gorenstein.

For the converse, let $S$ and $T$ be simple $A$-modules.  The above recollement induces the following triangles in $\D^{\bb}(\Fmod(\Gprj A))$,
\[ i_*i^! T \rt T \rt j_*j^*T \rightsquigarrow;\]
\[ j_!j^* S \rt S \rt i_*i^* S \rightsquigarrow.\]
Now, apply the homological functor $\Hom_{\D^{\bb}_{\Gp}(\mmod A)}(S, -)$ on the first  and
the  cohomological functors
 $$\Hom_{\D^{\bb}_{\Gp} (\mmod A)} ( -, i_*i^!T)\  \text{and}\  \Hom_{\D^{\bb}_{\Gp}(\mmod A)}\ (-, j_*j^*T)$$ on the second triangle to obtain the following long exact sequences
 \[\xymatrix@R=0.5cm@C=0.5cm{ \cdots\ar[r] & \Hom(S, i_*i^!T[i]) \ar[r] & \Hom(S, T[i]) \ar[r] & \Hom(S, j_*j^* T[i]) \ar[r] & \cdots;\\
 \cdots \ar[r] & \Hom( i_*i^*S, i_*i^!T[i]) \ar[r] & \Hom( S, i_*i^!T[i]) \ar[r] & \Hom( j_!j^*S, i_*i^!T[i]) \ar[r] & \cdots; \\
 \cdots \ar[r] & \Hom(i_*i^*S, j_*j^*T[i]) \ar[r] & \Hom(S, j_*j^*T[i]) \ar[r] & \Hom(j_!j^*S, j_*j^*T[i]) \ar[r] & \cdots.}\]
Since $B$ is Gorenstein, $i^*S$ and $i^!T$ lie in $\K^{\bb}(\Gprj B)$. Thus
$\Hom(i_*i^*S, i_*i^!T[i])=0$, for $i \gg0$. Similarly, since $C$ is Gorenstein, $\Hom(j_!j^*S, j_*j^*T[i])=0$ for $i \gg 0$. Moreover, by the adjointness properties  $\Hom( j_!j^*S, i_*i^!T[i])=0 = \Hom(i_*i^*S, j_*j^*T[i])$ for every $i \in \Z$.

Consequently, the above three exact sequences imply that $$\Ext_{\mathcal G}^i(S, T) \cong \Hom_{\D^{\bb}_{\Gp}(\mmod(\Gprj A))}(S, T[i])=0 $$ for $i\gg0$. Therefore, by \cite[Themorem 12.1.4]{EJ}, every simple, and hence every finitely generated, $A$-module has finite Gorenstein projective dimension. Now, Theorem 1.18 of \cite{CFH} yields that $A$ is Gorenstein.
\end{proof}

The following corollary generalises a result due to Wiedemann \cite{W}, in terms of functor categories. We preface it with a Lemma.

\begin{lemma}\label{Gor.ness}
Let $\La$ be an artin algebra satisfying the property $(\divideontimes)$. Then $\Fmod(\Gprj \La)$ has finite global dimension if and only if $\La$ is Gorenstein.
\end{lemma}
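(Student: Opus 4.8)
The idea is to sandwich $\gldim \Fmod(\Gprj \La)$ between the invariant $\sup\{\Gpd_{\La} X \mid X \in \mmod \La\}$ and that same invariant plus $2$, and then to invoke the classical fact that an Artin algebra is Gorenstein exactly when this supremum is finite. Throughout we use that, by Remark~\ref{GPrj-dualizing}, $\Gprj \La$ is a dualizing $R$-variety, so that $\Fmod(\Gprj \La)$ is an abelian category with enough projective objects and, since $\Gprj \La$ is closed under direct summands, its finitely generated projectives are precisely the representable functors $\Hom_{\La}(-,G)|_{\Gprj \La}$ with $G \in \Gprj \La$. We also use repeatedly that, $\La$ being virtually Gorenstein, $\Gprj \La$ is contravariantly finite in $\mmod \La$, and that every right $\Gprj \La$-approximation $\psi : G \rt X$ is epic, as a projective cover of $X$ factors through $\psi$.

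The key step is the computation: for each $X \in \mmod \La$ the functor $\Hom_{\La}(-,X)|_{\Gprj \La}$ lies in $\Fmod(\Gprj \La)$, and
\[\pd_{\Fmod(\Gprj \La)}\big(\Hom_{\La}(-,X)|_{\Gprj \La}\big)\ =\ \Gpd_{\La} X\]
whenever one of the two sides is finite. To see this, choose a right $\Gprj \La$-approximation $G_0 \twoheadrightarrow X$ with kernel $X_1 \in \mmod \La$. Left exactness of the functors $\Hom_{\La}(G,-)$, $G \in \Gprj \La$, together with the defining property of the approximation, yields a short exact sequence $0 \rt \Hom_{\La}(-,X_1)|_{\Gprj \La} \rt \Hom_{\La}(-,G_0)|_{\Gprj \La} \rt \Hom_{\La}(-,X)|_{\Gprj \La} \rt 0$ with projective middle term; applying the same to $X_1$ already exhibits $\Hom_{\La}(-,X)|_{\Gprj \La}$ as finitely presented. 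Since $\Gpd_{\La} X_1 = \Gpd_{\La} X - 1$ whenever $\Gpd_{\La} X \geq 1$, iterating this construction terminates after $\Gpd_{\La} X$ steps in a representable functor, which proves $\leq$. For $\geq$, a finite projective resolution $0 \rt \Hom_{\La}(-,G_n)|_{\Gprj \La} \rt \cdots \rt \Hom_{\La}(-,G_0)|_{\Gprj \La} \rt \Hom_{\La}(-,X)|_{\Gprj \La} \rt 0$ (by representables, as the functor is finitely presented) is, by the Yoneda lemma, induced by a complex $0 \rt G_n \rt \cdots \rt G_0 \rt X \rt 0$ of $\La$-modules with all $G_i \in \Gprj \La$; evaluating the exact complex of functors at $\La \in \prj \La \subseteq \Gprj \La$, an exact operation, shows that this complex of modules is exact, whence $\Gpd_{\La} X \leq n$.

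Granting this, let $F \in \Fmod(\Gprj \La)$ be arbitrary and fix a presentation $\Hom_{\La}(-,G_1)|_{\Gprj \La} \xrightarrow{\Hom_{\La}(-,f)} \Hom_{\La}(-,G_0)|_{\Gprj \La} \rt F \rt 0$ with $f : G_1 \rt G_0$ in $\Gprj \La$. Left exactness of $\Hom_{\La}(G,-)$ identifies $\Ker(\Hom_{\La}(-,f))$ with $\Hom_{\La}(-,\Ker f)|_{\Gprj \La}$, so the key step gives $\pd F \leq 2 + \Gpd_{\La}(\Ker f)$. Letting $X$ run through $\mmod \La$ in the key step and $F$ through $\Fmod(\Gprj \La)$ above, we obtain
\[\sup_{X \in \mmod \La} \Gpd_{\La} X\ \leq\ \gldim \Fmod(\Gprj \La)\ \leq\ 2 + \sup_{X \in \mmod \La} \Gpd_{\La} X,\]
so that $\gldim \Fmod(\Gprj \La)$ is finite precisely when $\sup_{X}\Gpd_{\La} X$ is finite, i.e. precisely when $\La$ is Gorenstein.

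The technical heart — and the only point where virtual Gorensteinness is genuinely needed — is the finite presentation of $\Hom_{\La}(-,X)|_{\Gprj \La}$ and of $\Hom_{\La}(-,\Ker f)|_{\Gprj \La}$, which rests on the contravariant finiteness of $\Gprj \La$ in $\mmod \La$; the remaining ingredients (the behaviour of $\Gpd_{\La}$ along short exact sequences, the exactness of evaluation at $\La$, and the characterisation of Gorenstein algebras by bounded Gorenstein projective dimension) are classical.
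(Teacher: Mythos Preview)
Your proof is correct and follows essentially the same strategy as the paper's: both directions hinge on translating, via Yoneda, between $\Gp$-proper resolutions in $\mmod\La$ and projective resolutions in $\Fmod(\Gprj\La)$, and then invoking the characterisation of Gorensteinness by finiteness of $\sup_{X}\Gpd_{\La}X$ (the paper cites \cite[Theorem~12.3.1]{EJ}). The paper's argument for the ``only if'' part is exactly your $\geq$-half of the key step, phrased from the module side: take a $\Gp$-proper resolution of $M$, apply $\Hom_{\La}(-,\,\cdot\,)|_{\Gprj\La}$, and read off via Yoneda that the $n$-th syzygy is Gorenstein projective.

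Where you go further is in making the ``if'' part quantitative: the paper dismisses it as ``an easy computation'', whereas you extract the sandwich
\[
\sup_{X\in\mmod\La}\Gpd_{\La}X\ \leq\ \gldim\Fmod(\Gprj\La)\ \leq\ 2+\sup_{X\in\mmod\La}\Gpd_{\La}X,
\]
via the identification $\pd_{\Fmod(\Gprj\La)}\Hom_{\La}(-,X)|_{\Gprj\La}=\Gpd_{\La}X$ together with the observation that an arbitrary finitely presented functor has a second syzygy of the form $\Hom_{\La}(-,\Ker f)|_{\Gprj\La}$. This is a genuine refinement of the paper's statement, not a different method.
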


\begin{proof}
Let $\La$ be an $n$-Gorenstein algebra. Consider a functor $F$ in $\Fmod(\Gprj \La)$, i.e. there is an exact sequence
$$ (-, X_1) \st{(-,d_1)} \lrt (-, X_0) \lrt F \lrt 0$$
with $X_1$ and $X_0$ in $\Gprj \La$. Let $X$ be the kernel of $d_1$. Then there is an exact sequence $$0 \lrt G \lrt X_{n-1} \lrt \cdots \lrt X_2 \lrt X \lrt 0$$
such that for every $i \in \{ 2, \cdots, n-1\}$, $X_i$ is a projective $\La$-module and $G$ is a Gorenstein projective $\La$-module. This exact sequence gives a projective resolution
$$ 0 \lrt (-, G) \lrt (-, X_{n-1}) \lrt \cdots \lrt (-, X_2) \lrt (-, X_1) \lrt (-, X_0) \lrt F \lrt 0 $$
of $F$. This means that ${\rm gl.dim} (\Fmod (\Gprj \La)) \leq n$. For the `only if' part, let $M$ be a finitely generated $\La$-module and ${\rm gl. dim } (\Fmod(\Gprj \La))= n$. Since $\Gprj \La$ is cotravariantly finite, there is a $\Gp$-proper exact resolution
$$ \cdots \lrt G_{n-1} \lrt \cdots \lrt G_1 \lrt G_0 \lrt M \lrt 0$$
of $M$, where  $G_i \in \Gprj \La$ for all $i$. One should apply Yoneda Lemma to see that the $n$-th syzygy of the above resolution is Gorenstein projective. So every finitely generated $\La$-module has finite Gorenstein projective dimension. Now \cite[Theorem 1.8]{CFH} implies that $\La$ is Gorenstein.
\end{proof}

\begin{corollary}\label{Wied}
Let $A$, $B$ and $C$ be artin algebras satisfying property $(\divideontimes)$. Assume that there exists the following recollement
\[ \xymatrix@C=0.5cm{ \D^{\bb}(\Fmod(\Gprj B)) \ar[rrr]^{i_*}  &&& \D^{\bb}(\Fmod(\Gprj A)) \ar[rrr]^{j^*} \ar@/^1.5pc/[lll]_{i^!} \ar@/_1.5pc/[lll]_{i^*} &&& \D^{\bb}(\Fmod(\Gprj C)).\ar@/^1.5pc/[lll]_{j_*} \ar@/_1.5pc/[lll]_{j_!} }\]
\vspace{0.1cm}
Then $\Fmod(\Gprj A)$ has finite global dimension if and only if $\Fmod(\Gprj B)$ and $\Fmod(\Gprj C)$ have finite global dimensions.
\end{corollary}

\begin{proof}
It follows directly from Lemma \ref{Gor.ness} and Theorem \ref{MainThm}$(ii)$.
\end{proof}

\section{Auslander-Reiten triangles in $\D^{\bb}_{\Gp}(\mmod \La)$}
A known result due to Happel \cite{Hap2} states that for a finite dimensional algebra $\La$, $\D^{\bb}(\mmod \La)$ has Auslander-Reiten triangles if and only if $\La$ has finite global dimension. In this section we present a Gorenstein version of this result \cite[Corollary 1.5]{Hap2}. More precisely, we prove that if $\La$ satisfies property $(\divideontimes)$, i.e. if $\Gprj \La$ is a contravariantly finite subcategory of $\mmod \La$, then $\D_{\Gp}^{\bb}(\mmod \La)$ has Auslander-Reiten triangles if and only if $\La$ is Gorenstein. This also generalises the main result of \cite{G12} that states $\D^{\bb}_{\GP}(\mmod \La)$ has Auslander-Reiten triangles, provided $\La$ is a finite dimensional Gorenstein algebra of finite CM-type.

Recall that a map $\al:X \rt Y$ is called left almost split, if $\al$ is not a section and if every map $X \rt Y'$ which is not a section factors through $\al$. Dually, one can define a right almost split map.

Let $\CT$ be a triangulated $R$-category, where $R$ is a commutative artinian ring. Assume that $\CT$ is a Krull-Schmidt category and for every $X, Y \in \CT$, $\Hom_{\CT}(X,Y)$ is a finitely generated $R$-module. A triangle $X \st{\al} \rt Y \st{\beta}\rt Z \rightsquigarrow$ in a triangulated category $\CT$ is called an Auslander-Reiten triangle, if $\al$ is left almost split and $\beta$ is right almost split. We say that a triangulated category $\mathcal{T}$ has Auslander-Reiten triangles if for each indecomposable object $Z$ in $\mathcal{T}$ there is an Auslander-Reiten triangle as above.

In view of the following theorem, to show that $\D^{\bb}_{\Gp}(\mmod \La)$ has Auslander-Reiten triangles it is enough to show that $\D^{\bb}(\Fmod (\Gprj \La))$ has Auslander-Reiten triangles. We would like to thank Osamu Iyama for the idea of the following result.

\begin{theorem}\label{Equi}
Let $\La$ be an artin algebra satisfying property $\divideontimes$. Then there is an equivalence
$$\D^{\bb}_{\Gp}(\mmod \La) \simeq \D^{\bb}(\Fmod (\Gprj \La))$$
of triangulated categories.
\end{theorem}

\begin{proof}
Since $\Gprj \La$ is a contravariantly finite subcategory of $\mmod \La$, an argument similar to the proof of \cite[Theorem 3.6]{GZ} works to show that $\D^{\bb}_{\Gp}(\mmod \La) \simeq \K^{-, \Gp \bb}(\Gprj \La)$.

On the other hand, we know that $\D^{\bb}(\Fmod (\Gprj \La)) \simeq \K^{-,\bb}(\Prj \Fmod (\Gprj \La))$. Hence it is sufficient to prove that $\K^{-, \Gp \bb}(\Gprj \La) \simeq \K^{-,\bb}(\Prj \Fmod (\Gprj \La))$. To this end, we define a functor $$\phi: \Gprj \La \lrt \Prj \Fmod (\Gprj \La)$$
by $\phi(G)= \Hom_{\La}(-,G)$, for any finitely generated Gorenstein projective $\La$-module $G$. Note that every object in $\Prj \Fmod (\Gprj \La)$ is of the form $\Hom_{\La}(-, G)$ for some $G \in \Gprj \La$. This fact in conjunction  with Yoneda Lemma implies that $\phi$ is an equivalence. This equivalence can be extended, in a natural way, to the following equivalence
$$\bar{\phi} : \K(\Gprj \La) \lrt \K(\Prj \Fmod (\Gprj \La)).$$
Now observe that, in view of Yoneda Lemma, a sequence
$$ \cdots \rt G^{i-1} \rt G^i \rt G^{i+1} \rt \cdots $$
in $\Gprj \La$ is $\Gp$-acyclic if and only if the sequence
$$\cdots \rt \Hom_{\La}(-,G^{i-1}) \rt \Hom_{\La}(-, G^i) \rt \Hom_{\La}(-, G^{i+1}) \rt \cdots$$
in $\Prj \Fmod(\Gprj \La))$ is acyclic. Therefore, $\bar{\phi}$ restricts to the desired equivalence
\[\bar{\phi} : \K^{-, \Gp \bb}(\Gprj \La) \lrt \K^{-,\bb}(\Prj \Fmod (\Gprj \La)).\]
So the proof is complete.
\end{proof}

Let $\La$ be an artin algebra over a commutative artinian ring $R$. It can be easily seen that the duality $D : \mmod \La \rt \mmod \La^{\op}$ can be extended to the duality $D : \Fmod (\mmod \La) \lrt \Fmod ((\mmod \La)^{\op})$, defined by $D(F)(X)= D(F(X))$, for all $X  \in \mmod \La$.

Motivated by this fact, the notion of dualizing $R$-varieties is introduced by Auslander and Reiten \cite{AR74}. As this concept will be used to prove the existence of Auslander-Reiten triangles in $\D^{\bb}(\Fmod (\Gprj \La))$, we recall briefly the definition. An skeletally small additive category $\CC$ is called variety if idempotents split. An $R$-variety is a variety $\CC$ equipped with an $R$-module structure on abelian group $\Hom_{\CC}(X,Y)$, for each pair of objects $X$ and $Y$ in $\CC$.

Let $\CC$ be a finite $R$-variety, i.e. $\Hom_{\CC}(X,Y)$ is a finitely generated $R$-module for every $X$ and $Y$ in $\CC$. It is easily checked that the functor $D: ( \CC^{\op}, \mmod R) \lrt (\CC , \mmod R)$ given by $D(F)(X)=D(F(X))$, defines a duality. A dualizing $R$-variety is defined to be a finite $R$-variety $\CC$ with the property that for every finitely presented $\CC$-module $F$, the $\CC^{\op}$-module $DF$ is finitely presented as well.
\vspace{0.2cm}

{\bf Theorem.} \cite[Theorem 2.4]{AR74}\label{AR} A finite $R$-variety $\CC$ is  dualizing  if and only if it satisfies the following conditions.
\begin{itemize}
\item[$(i)$] $\CC$ and $\CC^{\op}$ have pseudokernels.
\item[$(ii)$] For any $Y \in \CC$, there is an object $C$ in $\CC$ such that the morphism
$$\Hom_{\CC}(X,Y) \lrt \Hom_{\End_{\CC}(C)^{\op}}(\Hom_{\CC}(C,X), \Hom_{\CC}(C,Y))$$
is an isomorphism for all $X \in \CC$.
\item[$(iii)$] For any $Y \in \CC^{\op}$, there is an object $C$ in $\CC^{\op}$ such that the morphism
$$\Hom_{\CC^{\op}}(X,Y) \lrt \Hom_{\End_{\CC^{\op}}(C)^{\op}}(\Hom_{\CC^{\op}}(C,Y) , \Hom_{\CC^{\op}}(C,X))$$
is an isomorphism for all $X$ in $\CC^{\op}$.
\end{itemize}

\begin{remark}\label{GPrj-dualizing}
Let $\La$ be an artin $R$-algebra satisfying property $\divideontimes$. Then $\Gprj \La$ is a dualizing $R$-variety. To see this note that since $\Gprj \La$ is a contravariantly finite subcategory of $\mmod\La$, by \cite[Corollary 2.6]{KS}, it is a functorially finite subcategory of $\mmod\La$. Now, one may use the fact that the functor $\Hom_{\La}(-,\La)$ induces a duality $\Gprj \La \lrt \Gprj \La^{\op}$, to deduce that $\Gprj \La$ satisfies all the statements of the above theorem.

So, the duality functor $D: \mmod \La \lrt \mmod \La^{\op}$ can be extended to  a duality $D : \Fmod (\Gprj \La)\lrt \Fmod ((\Gprj \La)^{\op})$.
Therefore any injective object in $\Fmod (\Gprj \La)$ is of the form $D \Hom_{\La}(G,-)$, for some $G \in \Gprj \La$.

We define a functor $$\mathcal{V} : \Prj \Fmod (\Gprj \La) \lrt \Inj \Fmod (\Gprj \La)$$ by $\mathcal{V} (\Hom_{\La}(-,G)) = D \Hom_{\La}(G,-)$. Clearly, this functor is an equivalence of categories and extends naturally to an equivalence $$\K(\Prj \Fmod (\Gprj \La)) \lrt \K(\Inj \Fmod (\Gprj \La))$$ of triangulated categories, again denoted by $\mathcal{V}.$
\end{remark}

Next proposition provides a version of Serre duality in the category $\D^{\bb}(\Fmod (\Gprj \La))$.

\begin{proposition}\label{Serre}
Let $\La$ be a Gorenstein algebra. Then, for every $\X, \Y \in \D^{\bb}(\Fmod (\Gprj \La))$,  there is the following isomorphism in $\D^{\bb}(\Fmod (\Gprj \La))$
$$ \Hom(\X, \Y ) \cong D \Hom(\Y , \mathcal{V} \X).$$
\end{proposition}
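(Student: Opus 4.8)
The plan is to show that the autoequivalence $\mathcal{V}$ is a Serre functor on $\D^{\bb}(\Fmod(\Gprj\La))$ by reducing, via bounded projective and injective resolutions, to a single Yoneda computation. Write $\Gamma:=\Fmod(\Gprj\La)$ throughout. By Remark~\ref{GPrj-dualizing}, $\Gprj\La$ is a dualizing $R$-variety, so $\Gamma$ is abelian with enough projectives --- the representables $\Hom_\La(-,G)$, $G\in\Gprj\La$ --- and, as recalled just before the statement, with enough injectives --- the objects $D\Hom_\La(G,-)$ --- and the $R$-duality restricts to an exact duality $D\colon\Gamma\to\Fmod((\Gprj\La)^{\op})$ with $D^{2}\cong\id$. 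Since $\La$ is Gorenstein, Lemma~\ref{Gor.ness} gives $\gldim \Gamma<\infty$. Consequently every object of $\D^{\bb}(\Gamma)$ is isomorphic to a bounded complex of projectives and also to a bounded complex of injectives, so the composites $\K^{\bb}(\Prj\Gamma)\hookrightarrow\K(\Gamma)\to\D^{\bb}(\Gamma)$ and $\K^{\bb}(\Inj\Gamma)\hookrightarrow\K(\Gamma)\to\D^{\bb}(\Gamma)$ are triangle equivalences. Transporting the equivalence $\mathcal{V}\colon\K(\Prj\Gamma)\to\K(\Inj\Gamma)$ along these produces the autoequivalence $\mathcal{V}$ of $\D^{\bb}(\Gamma)$ occurring in the proposition; finite global dimension is precisely what makes this a well-defined functor on all of $\D^{\bb}(\Gamma)$.

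Next I would record the module-level identity on which everything rests. Fix $G\in\Gprj\La$, put $P=\Hom_\La(-,G)$, and let $F\in\Gamma$ be arbitrary. Yoneda's lemma gives $\Hom_\Gamma(P,F)\cong F(G)$, while applying the duality $D$ and then Yoneda's lemma in $\Fmod((\Gprj\La)^{\op})$ yields
\[
\Hom_\Gamma(F,\mathcal{V}P)=\Hom_\Gamma\bigl(F,\,D\Hom_\La(G,-)\bigr)\cong\Hom_{\Fmod((\Gprj\La)^{\op})}\bigl(\Hom_\La(G,-),\,DF\bigr)\cong (DF)(G)=D\bigl(F(G)\bigr).
\]
Hence $\Hom_\Gamma(P,F)\cong D\Hom_\Gamma(F,\mathcal{V}P)$, naturally in the projective $P$ and in $F$, and compatibly with finite direct sums; in particular it holds when $F$ is itself projective.

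Then I would globalise. Given $\X,\Y\in\D^{\bb}(\Gamma)$, choose bounded complexes of projectives $\PP\xrightarrow{\ \sim\ }\X$ and $\mathbf{Q}\xrightarrow{\ \sim\ }\Y$; these are $\K$-projective, and $\mathcal{V}\PP$ is a bounded, hence $\K$-injective, complex of injectives representing $\mathcal{V}\X$. Therefore
\[
\Hom_{\D^{\bb}(\Gamma)}(\X,\Y)\cong H^{0}\Hom^{\bullet}_\Gamma(\PP,\mathbf{Q}),\qquad \Hom_{\D^{\bb}(\Gamma)}(\Y,\mathcal{V}\X)\cong H^{0}\Hom^{\bullet}_\Gamma(\mathbf{Q},\mathcal{V}\PP).
\]
The degreewise isomorphisms $\Hom_\Gamma(\PP^{i},\mathbf{Q}^{j})\cong D\Hom_\Gamma(\mathbf{Q}^{j},\mathcal{V}\PP^{i})$ of the previous step assemble, because $D$ is exact and contravariant, into an isomorphism of complexes $\Hom^{\bullet}_\Gamma(\PP,\mathbf{Q})\cong D\Hom^{\bullet}_\Gamma(\mathbf{Q},\mathcal{V}\PP)$; applying $H^{0}$ and using that $H^{0}$ of the dualised complex is the $D$-dual of $H^{0}$ produces the desired natural isomorphism $\Hom_{\D^{\bb}(\Gamma)}(\X,\Y)\cong D\Hom_{\D^{\bb}(\Gamma)}(\Y,\mathcal{V}\X)$.

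The main obstacle will be the bookkeeping in this last step: checking that the degreewise Yoneda isomorphisms are compatible with the differentials of the two total $\Hom$-complexes, up to the usual Koszul signs, so that one genuinely obtains an isomorphism of complexes and not merely of the individual terms, and making sure the chosen resolutions are lined up so that both morphism groups in $\D^{\bb}(\Gamma)$ are computed by the asserted total complexes. A secondary subtlety, already visible in the first paragraph, is that $\Gprj\La$ may have infinitely many indecomposable objects, so one must work with the dualizing $R$-variety $\Gamma$ throughout rather than treating it as the module category of a finite-dimensional algebra; this is also why finite global dimension of $\Gamma$ (Lemma~\ref{Gor.ness}) rather than of $\La$ is the relevant hypothesis for extending $\mathcal{V}$ to $\D^{\bb}(\Gamma)$.
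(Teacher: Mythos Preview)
Your proof is correct and follows essentially the same route as the paper's: both use Lemma~\ref{Gor.ness} to reduce to $\K^{\bb}(\Prj\Gamma)$ and then establish the duality via Yoneda computations. The only cosmetic difference is that the paper transports both $\X$ and $\Y$ to complexes $X^\bullet,Y^\bullet\in\K^{\bb}(\Gprj\La)$ via the equivalence $\Prj\Gamma\simeq\Gprj\La$ and runs the Yoneda chain directly at the homotopy-category level, whereas you assemble the same identities degreewise and pass to $H^{0}$; the sign/compatibility concern you flag is handled implicitly in the paper by naturality of the Yoneda isomorphisms.
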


\begin{proof}
First note that it is known that if $\La$ is Gorenstein, then $\Gprj \La$ is contravariantly finite subcategory of $\mmod \La$. Now, by Lemma \ref{Gor.ness}, $\Fmod (\Gprj \La)$ has finite global dimension. Thus $\D^{\bb}(\Fmod (\Gprj \La)) \simeq \K^{\bb}(\Prj \Fmod (\Gprj \La))$ and so it is enough to prove the desired  isomorphism in $\K^{\bb}(\Prj \Fmod (\Gprj \La))$. Note that complexes $\X$ and $\Y$ in $\K^{\bb}(\Prj \Fmod (\Gprj \La))$ can be considered as $\Hom_{\La}(-, X^{\bullet})$ and $\Hom_{\La}(-, Y^\bullet)$ for some $X^\bullet, Y^\bullet \in \K^{\bb}(\Gprj \La)$, respectively.

Now, the result follows from the following isomorphisms that exist thanks to the Yoneda Lemma
\[\begin{array}{lllll}
D\Hom(\Hom_{\La}(-, Y^\bullet), \mathcal{V} \Hom_{\La}(-,X^\bullet)) & \cong D\Hom(\Hom_{\La}(-,Y^\bullet),D\Hom_{\La}(X^\bullet,-))
\\ & \cong D \Hom(\Hom_{\La}(X^\bullet, -), D \Hom_{\La}(-,Y^\bullet))
\\ & \cong D(D \Hom(X^\bullet,Y^\bullet))
\\& \cong \Hom(X^\bullet,Y^\bullet)
\\ & \cong \Hom(\Hom_{\La}(-,X^\bullet), \Hom_{\La}(-,Y^\bullet)).
\end{array}\]
\end{proof}

\begin{theorem}
Let $\La$ be a Gorenstein algebra. Then the derived category $\D^{\bb}(\Fmod(\Gprj \La))$ admits Auslander-Reiten triangles.
\end{theorem}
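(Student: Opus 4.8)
The plan is to run Happel's argument that $\D^{\bb}(\mmod\Gamma)$ has Auslander--Reiten triangles whenever $\gldim\Gamma<\infty$, now with $\mmod\Gamma$ replaced by $\Fmod(\Gprj\La)$ and with Serre duality supplied by Proposition~\ref{Serre}. First I would record the finiteness properties of $\mathcal T:=\D^{\bb}(\Fmod(\Gprj\La))$. A Gorenstein artin algebra is virtually Gorenstein, so by Remark~\ref{GPrj-dualizing} the category $\Gprj\La$ is a dualizing $R$-variety; hence $\Fmod(\Gprj\La)$ is an $R$-linear abelian category with enough projectives and injectives (its injectives being the functors $D\Hom_\La(G,-)$, $G\in\Gprj\La$), equipped with a duality $D$, and in which every $\Hom$ group has finite length over $R$. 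By Lemma~\ref{Gor.ness} it has finite global dimension, so passing to syzygies and cosyzygies of bounded length yields triangle equivalences $\mathcal T\simeq\K^{\bb}(\Prj\Fmod(\Gprj\La))\simeq\K^{\bb}(\Inj\Fmod(\Gprj\La))$; and since the projective objects of $\Fmod(\Gprj\La)$ are by Yoneda the representables $\Hom_\La(-,G)$, which have finite-length $\Hom$ groups into every finitely presented object, all $\Ext$ groups in $\Fmod(\Gprj\La)$ are again of finite length over $R$. Thus $\mathcal T$ is $R$-linear and $\Hom$-finite. Moreover the endomorphism ring of $\Hom_\La(-,G)$ is $\End_\La(G)$, an artinian ring, so $\Prj\Fmod(\Gprj\La)$ is a Krull--Schmidt additive category and hence $\mathcal T\simeq\K^{\bb}(\Prj\Fmod(\Gprj\La))$ is a Krull--Schmidt triangulated category.

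Next I would upgrade $\mathcal{V}$ to a Serre functor on $\mathcal T$. Transporting the equivalence $\mathcal{V}\colon\K^{\bb}(\Prj\Fmod(\Gprj\La))\lrt\K^{\bb}(\Inj\Fmod(\Gprj\La))$ through the equivalences just obtained produces an auto-equivalence $S$ of $\mathcal T$, and Proposition~\ref{Serre} says exactly that $S$ realises Serre duality: there are natural isomorphisms $\Hom_{\mathcal T}(\X,\Y)\cong D\,\Hom_{\mathcal T}(\Y,S\X)$ for all $\X,\Y\in\mathcal T$.

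Finally I would invoke the classical principle (due to Happel in the finite-global-dimension case, and to Reiten--Van den Bergh in general) that an $R$-linear $\Hom$-finite Krull--Schmidt triangulated category possessing a Serre functor $S$ has Auslander--Reiten triangles, the one ending at an indecomposable object $Z$ being $S Z[-1]\lrt Y\st{v}{\lrt}Z\st{w}{\lrt}S Z$. Concretely, for indecomposable $Z$ the ring $\End_{\mathcal T}(Z)$ is local; the isomorphism $\Hom_{\mathcal T}(Z,S Z)\cong D\,\End_{\mathcal T}(Z)$ obtained from the previous step (put $\X=\Y=Z$ and apply $D$) singles out, up to scalar, a morphism $w\colon Z\to S Z$ generating the simple socle of $D\,\End_{\mathcal T}(Z)$; completing $w$ to a triangle as above, one checks via the naturality of Serre duality and the fact that $w$ annihilates $\rad\End_{\mathcal T}(Z)$ that $v$ is right almost split and $S Z[-1]\to Y$ is left almost split. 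Hence $\D^{\bb}(\Fmod(\Gprj\La))$ admits Auslander--Reiten triangles.

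The delicate point is this last step, since the Reiten--Van den Bergh machinery is usually stated for categories that are $\Hom$-finite over a field. It nonetheless carries over to the present setting because $\mathcal T$ is $\Hom$-finite over the commutative artinian ring $R$ and Krull--Schmidt, so that Fitting's lemma and the radical/socle computations remain available. The only genuinely new ingredients beyond this are the $\Hom$- and $\Ext$-finiteness of $\mathcal T$ (consequences of the dualizing-variety structure of $\Gprj\La$ together with $\gldim\Fmod(\Gprj\La)<\infty$) and the identification of the functor induced by $\mathcal{V}$ as a Serre functor, which is Proposition~\ref{Serre}.
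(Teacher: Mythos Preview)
Your proposal is correct and follows essentially the same route as the paper: the paper's proof simply says that ``the same argument as in the proof of \cite[Theorem I.4.6]{Hap}, or \cite[Theorem I.2.4]{RV}, together with Proposition~\ref{Serre} works,'' and you have unpacked exactly that argument, supplying the Hom-finiteness and Krull--Schmidt verifications that the cited references need and then constructing the Auslander--Reiten triangle from the Serre functor in the standard way.
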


\begin{proof}
The same argument as in the proof of \cite[Theorem I. 4.6]{Hap}, or \cite[Theorem I.2.4]{RV}, together with the Proposition \ref{Serre} works to prove the result.
\end{proof}

Following result should be compared with \cite[Corollay 1.5]{Hap2}.

\begin{theorem}\label{AR-triangels}
Let $\La$ be an artin algebra satisfying property $\divideontimes$. Then the Gorenstein derived category $\D^{\bb}_{\Gp}(\mmod \La)$ has Auslander-Reiten triangles if and only if $\La$ is Gorenstein.
\end{theorem}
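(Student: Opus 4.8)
The plan is to push the whole statement, through Proposition~\ref{Equi}, into the functor category $\Fmod(\Gprj\La)$, and there to play it against Lemma~\ref{Gor.ness}. Since a Gorenstein artin algebra is in particular virtually Gorenstein, the standing hypothesis lets us invoke Proposition~\ref{Equi}, which yields a triangle equivalence
\[\D^{\bb}_{\Gp}(\mmod\La)\ \simeq\ \D^{\bb}(\Fmod(\Gprj\La)).\]
So it suffices to decide when $\D^{\bb}(\Fmod(\Gprj\La))$ has Auslander--Reiten triangles.

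The ``if'' direction is then immediate: if $\La$ is Gorenstein, the theorem above says $\D^{\bb}(\Fmod(\Gprj\La))$ admits Auslander--Reiten triangles, and the displayed equivalence transports them to $\D^{\bb}_{\Gp}(\mmod\La)$.

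For the ``only if'' direction, assume $\D^{\bb}_{\Gp}(\mmod\La)$, and hence $\D^{\bb}(\Fmod(\Gprj\La))$, has Auslander--Reiten triangles. By the Reiten--Van den Bergh correspondence \cite{RV} this amounts to $\D^{\bb}(\Fmod(\Gprj\La))$ carrying a Serre functor $\mathbb{S}$. Recall from Remark~\ref{GPrj-dualizing} that $\Gprj\La$ is a dualizing $R$-variety, so $\Fmod(\Gprj\La)$ is a $\Hom$-finite abelian $R$-category with enough projectives and enough injectives, equipped with the duality $D$. One now runs Happel's argument from the proof of \cite[Corollary~1.5]{Hap2} inside this abelian category: Serre duality, together with the fact that a projective stalk $\Hom_\La(-,G)$ has no higher self-extensions, forces $\mathbb{S}$ to carry it to the injective stalk $D\Hom_\La(G,-)$ --- equivalently, $\mathbb{S}$ restricts on $\K^{\bb}(\Prj\Fmod(\Gprj\La))$ to the Nakayama functor $\mathcal{V}$ --- and dually $\mathbb{S}^{-1}$ carries injective stalks to projective stalks; since $\mathbb{S}$ and $\mathbb{S}^{-1}$ have bounded image, each indecomposable projective of $\Fmod(\Gprj\La)$ has finite injective dimension and each indecomposable injective has finite projective dimension. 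Promoting these to a uniform bound --- here one really uses the $\Hom$-finiteness over the artinian ring $R$ and the duality $D$ --- gives $\gldim\Fmod(\Gprj\La)<\infty$, and Lemma~\ref{Gor.ness} then yields that $\La$ is Gorenstein.

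The step I expect to be the main obstacle is precisely this last one: passing from ``each indecomposable projective and injective has finite homological dimension'' to ``$\gldim\Fmod(\Gprj\La)<\infty$''. In Happel's setting it comes for free because $\La$ has only finitely many simple modules, whereas $\Gprj\La$ may have infinitely many indecomposables, so the dualizing $R$-variety structure must be exploited in earnest. A possibly cleaner variant is the contrapositive: if $\La$ is not Gorenstein then, by \cite[Theorem~12.3.1]{EJ}, some $M\in\mmod\La$ has infinite Gorenstein projective dimension; taking a $\Gp$-proper resolution of $M$ and applying the Yoneda lemma exhibits the finitely presented functor obtained by restricting $\Hom_\La(-,M)$ to $\Gprj\La$ as one of infinite projective dimension in $\Fmod(\Gprj\La)$, and confronting this with the boundedness of a hypothetical Serre functor contradicts Serre duality. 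Either way, the real content is carrying Happel's finiteness argument from $\mmod\La$ over to the functor category $\Fmod(\Gprj\La)$.
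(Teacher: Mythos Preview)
Your proposal follows essentially the same route as the paper: reduce via Proposition~\ref{Equi} to $\D^{\bb}(\Fmod(\Gprj\La))$, invoke the preceding theorem for the ``if'' direction, and for the converse run Happel's argument to force $\gldim\Fmod(\Gprj\La)<\infty$ and then conclude with Lemma~\ref{Gor.ness}. The paper itself simply writes ``the same argument as in the proof of \cite[Theorem~1.4]{Hap2}'' at exactly the step you single out as the main obstacle, so your discussion of that passage is in fact more explicit than the original.
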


\begin{proof}
Assume that $\La$ is a Gorenstein algebra. The above theorem and Theorem \ref{Equi} imply that $\D^{\bb}_{\Gp}(\mmod \La)$ admits Auslander-Reiten triangles.

For the converse, let the Gorenstein derived category $\D^{\bb}_{\Gp}(\mmod \La)$ has Auslander-Reiten triangles. Then, by Theorem \ref{Equi}, $\D^{\bb}(\Fmod (\Gprj \La))$ has Auslander-Reiten triangles. Now, the same argument as in the proof of \cite[Theorem 1.4]{Hap2} shows that $\Fmod(\Gprj \La)$ has finite global dimension. Hence, by Lemma \ref{Gor.ness}, $\La$ is Gorenstein.
\end{proof}

Let $\La$ be a noetherian ring. The canonical functor $\K^{\bb}(\prj \La)\lrt \D^{\bb}(\mmod \La)$ is fully faithful. So we can consider $\K^{\bb}(\prj \La)$ as a (thick) triangulated subcategory of $\D^{\bb}(\mmod \La)$. The singularity category of $\La$, $\D_{\rm sg}(\La)$, is defined to be the Verdier quotient of $\D^{\bb}(\mmod \La)$ by $\K^{\bb}(\prj \La)$, see \cite{Buc} and Orlov \cite{Or}.

Moreover, by \cite[Lemma 3.5]{GZ}, the canonical functor $\K^{\bb}(\Gprj \La) \lrt \D^{\bb}_{\Gp}(\mmod \La)$ is fully faithful. So we can consider the Verdier quotient $\D^{\bb}_{\Gp}(\mmod \La) / \K^{\bb}(\Gprj \La)$, that is called the Gorenstein singularity category of $\La$ and is denoted by $\D_{\rm Gsg}(\La)$. Gao and Zhang \cite{GZ} used this category to provide a characterization for Gorenstein rings.

Here we apply Theorem \ref{Equi} and this Gorenstein version of the singularity category to generalise Theorem 4.5 of \cite{KZ} for artin algebras satisfying $\divideontimes$.

Recall that an abelian category $\CA$ is called CM-free, if $\Prj \CA= \GPrj \CA$.
An artin algebra $A$ is called CM-free, if $\mmod A$ is a CM-free category, see \cite{Chen}.

\begin{proposition}\label{CM-free}
Let $\La$ be an artin algebra satisfying $\divideontimes$. Then the category $\Fmod (\Gprj \La)$ is CM-free.
\end{proposition}

\begin{proof}
In view of Theorem \ref{Equi}, we have an equivalence
$$\D^{\bb}_{\Gp}(\mmod \La) \st{\bar{\phi}}\lrt  \D^{\bb}(\Fmod (\Gprj \La))$$
of triangulated categories. So there is a commutative diagram
\[\xymatrix{\D_{\rm Gsg}(\La) \ar[r]^{\st{\widetilde{\phi}}\sim \ \ \ \ \ \ } \ar@{<-}[d] & \D_{\sg}(\Fmod(\Gprj \La)) \ar@{<-}[d] \\ \D^b_{\Gp}(\mmod \La) \ar@{<-_{)}}[d] \ar[r]^{\sim \ \ \ \ \ }  & \D^{\bb}(\Fmod(\Gprj \La))\ar@{<-_{)}}[d] \\ \K^{\bb}(\Gprj \La) \ar[r]^{\sim \ \ \ \ \ \ } & \K^{\bb}(\Prj \Fmod(\Gprj \La)).}\]
Now, \cite[Theorem 3.1]{BJO} implies the existence of the following commutative diagram
\[\xymatrix{\D_{\rm Gsg}(\La) \ar[r]^{\st{\widetilde{\phi}}\sim} \ar@{<-_{)}}[d] & \D_{\sg}(\Fmod(\Gprj\La)) \ar@{<-_{)}}[d] \\ \K_{\rm tac}(\Gprj \La) \ar[r]^{\sim \ \ \ \ \ \ \ }  & \K_{\rm tac}(\Prj \Fmod (\Gprj \La)).}\]
Note that syzygies of complexes in $\K_{\rm tac}(\Gprj \La)$ are Gorenstein projective, see \cite{Hua,SSW}. So complexes in  $\K_{\rm tac}(\Gprj \La) $ are split exact and hence $\K_{\rm tac}(\Gprj \La)$ vanishes. This in turn implies that $\K_{\rm tac}(\Prj \Fmod (\Gprj \La))$ vanishes. But, this is equivalent to say that the category $\Fmod (\Gprj \La)$ is CM-free.
\end{proof}

Let $\La$ be an artin algebra of finite CM-type. So there is a finitely generated Gorenstein projective $\La$-module $G$ such that $\Gprj\La = \add G$. The endomorphism $\End_{\La}(G)$ is then called the Gorenstein Auslander algebra of $\La$. As a direct consequence of the above proposition, we have the following corollary. This result recently has been proved by Kong and Zhang \cite[Theorem 4.5]{KZ}.

\begin{corollary}
Let $\La$ be an artin algebra of finite CM-type. Then the Gorenstein Auslander algebra of $\La$ is CM-free.
\end{corollary}

\begin{proof}
Let $\Gprj \La = \add G$, where $G \in \Gprj \La$. Then the covariant functor $\Hom_{\La}(G,-)$ induces an equivalence $\mmod (\Gprj \La) \simeq \mmod \End_{\La}(G)$. By Proposition \ref{CM-free}, $\mmod (\Gprj \La)$, and hence $\End(G)$, is CM-free.
\end{proof}

\section{Equivalence of $\D^{\bb}_{\Gp}(\mmod\La)$ and $\D^{\bb}_{\Gi}(\mmod\La)$}\label{6}
Let $\CA$ be an abelian category. One can define the concept of $\CY$-coacyclic complexes for a subcategory $\CY$ of $\CA$, i.e. a complex $\A$ such that the induced complex $\Hom_{\CA}(\A, Y)$ is acyclic for every $Y\in \CY$. Also, a $\CY$-quasi-isomorphism is  a chain map $f : \A \lrt {\bf B}$ such that $\Hom_{\CA}(f, Y)$ is a quasi-isomorphism for all $Y \in \CY$. So, we have the relative derived category $\D_{\CY}(\CA)$, defined as $\D_{\CY}(\CA) := \K(\CA) /\K_{\CY \mbox{-} {\rm ac}}(\CA)$, where $\K_{\CY \mbox{-} {\rm ac}}(\CA)$ denotes the homotopy category of $\CY$-coacyclic complexes.

If $\CA$ has enough injective objects and $\CY=\GInj \CA$, then the relative derived category $\D_{\GI}(\CA)$ is called Gorenstein (injective) derived category of $\CA$. In case $ \CA= \mmod \La$ and $\CY=\Ginj \La$, where $\La$ is a ring, we write $\K_{\Gi \mbox{-}\ac}(\mmod\La)$ for $\K_{\CY\mbox{-}\ac}(\CA)$ and $\D_{\Gi}(\mmod\La)$ for $\D_{\CY}(\CA)$.

Let $\La$ be a Gorenstein ring. By  \cite[Corollary 12.3.5]{EJ} every short exact sequence is $\GP$-acyclic if and only if it is $\GI$-coacyclic. This, in turn, implies that in this case $\D_{\GP}(\Mod \La)\simeq \D_{\GI}(\Mod \La)$.
Our aim in this section is to get conditions for the equivalences of the triangulated categories $\D^{\bb}_{\Gp}(\mmod \La)$ and $\D^{\bb}_{\Gi}(\mmod \La)$.

Given an artin algebra $\La$, recall that  the Nakayama functor on $\mmod \La$ is defined as $$\nu := D\Hom_{\La}(-,\La): \mmod \La \lrt \mmod \La.$$ Restriction of the Nakayama functor to the category $\prj \La$ induces an equivalence $\nu : \prj \La \lrt \inj \La$. The quasi-inverse of this equivalence is given by $$\nu^{-}:= \Hom_{\La}(D\La,-): \inj \La \lrt \prj \La.$$ Note that, if one consider $\nu^-: \mmod \La \lrt \mmod \La$, then $(\nu, \nu^-)$ is an adjoint pair. Recently, it has been shown by Beligiannis \cite[Proposition 3.4]{Be2} that the Nakayama functor extends  to equivalence $\nu : \Gprj \La \lrt \Ginj \La$ of categories with the quasi-inverse $\nu^-$.

\begin{theorem}\label{Thm6.1}
Let $\La$ be an artin algebra of finite CM-type. Then there is an equivalence
\[\D_{\Gp}^{\bb}(\mmod \La) \simeq \D_{\Gi}^{\bb}(\mmod \La)\]
of triangulated categories.
\end{theorem}

\begin{proof}
Since  $\La$ is an artin algebra of finite CM-type, there is a finitely generated Gorenstein projective  $\La$-module $G$ such that $\Gprj \La = \add G$ and $\Ginj \La= \add \nu G$. Clearly, $\add G$, resp. $\add \nu G$, is a contravariantly, resp. covariantly, finite subcategory of $\mmod \La$.

Hence by Theorem \ref{Equi} there exist the following equivalences of triangulated categories
\[\begin{array}{ll}
\D^{\bb}_{\Gp}(\mmod \La) & \simeq \D^{\bb}(\Fmod (\Gprj \La)) \\
& \simeq \D^{\bb}(\Fmod (\add G))
\end{array}
\]

Now, the covariant functor $\Hom_{\La}(G,-)$ induces an equivalence
$$\D^{\bb}(\Fmod (\add G)) \simeq \D^{\bb}(\mmod \End(G))$$
of triangulated categories.

The same argument as in the proof of Theorem 3.6 of \cite{GZ} works to obtain a triangle equivalence
$$\D^{\bb}_{\Gi}(\mmod \La) \simeq \K^{+, \Gi \bb}(\Ginj \La),$$
where $\K^{+, \Gi \bb}(\Ginj \La)$ is the full subcategory of $\K^{+}(\Ginj \La)$ consisting of all complexes $\X$ such that there is an integer $n$ in which ${\rm H}^i(\Hom_{\La}(\X, G))=0$, for all $i \geq n$ and every $G \in \Ginj \La$. The Yoneda functor induces a duality
$$\K^{+, \Gi \bb}(\Ginj \La) \simeq \K^{-, \bb}(\Prj \Fmod (\Ginj \La)^{\op}),$$
where $\Fmod ((\Ginj \La)^{\op})$ denotes the category of covariant functors from $\Ginj \La$ to $\CA b$.

On the other hand, there is an equivalence
$$\D^{\bb}(\mmod (\Ginj \La)^{\op}) \simeq \K^{-, \bb}(\Prj \Fmod (\Ginj \La)^{\op})$$
of triangulated categories. All together, we have an equivalence
$$\D^{\bb}_{\Gi}(\mmod \La) \simeq \D^{\bb}(\Fmod( (\Ginj \La)^{\op}))^{\op}$$
of triangulated categories.
 Moreover, the covariant functor $\Hom_{\La}(\nu G, -)$ gives a triangle equivalence
$$\D^{\bb}(\Fmod ((\Ginj \La)^{\op})) \simeq \D^{\bb}(\mmod \End_{\La}(\nu G)^{\op}).$$
 Thus,  we have an equivalence
$$\D_{\Gi}^{\bb}(\mmod \La) \simeq \D^{\bb}(\mmod \End_{\La}(\nu G)^{\op})^{\op}$$
of triangulated categories.

For an artin algebra $\Ga$, the duality $D: \mmod \Ga \lrt \mmod \Ga^{\op}$ can be extended to the duality $\R D : \D^{\bb}(\mmod \Ga) \lrt \D^{\bb}(\mmod \Ga^{\op})$ of bounded derived categories. Therefore, there is an equivalence
\[\D^{\bb}(\mmod \End_{\La}(G)^{\op})^{\op} \lrt \D^{\bb}(\mmod \End_{\La}(G)).\]
Now, the fact that $\End_{\La}(\nu G) \cong \End_{\La}(G)$ completes the proof.
\end{proof}

In the following we intend to prove this result for a virtually Gorenstein algebra $\La$. By the preceding discussion, we need to construct a duality between $\Fmod (\Gprj \La)$ and $\Fmod ((\Ginj \La)^{\op})$. To this end, we need the following lemma.

\begin{lemma}
Let $\La$ be a virtually Gorenstein algebra. Then the duality $D: \mmod \La \lrt \mmod \La^{\op}$ induces the duality
$$D: \Fmod (\Ginj \La) \lrt \Fmod ((\Ginj \La)^{\op}).$$
\end{lemma}
\begin{proof}
By \ref{GPrj-dualizing}, $\Gprj \La$ is a dualizing $R$-variety. On the other hand, there is an equivalence $\nu : \Gprj \La \lrt \Ginj \La$. Therefore, $\Ginj \La$ has the properties just derived for $\Gprj \La$ and so $\Ginj \La$ is a dualizing $R$-variety. Hence we have the desired duality.
\end{proof}

Observe that the Nakayama functor $\nu: \Ginj \La \st{\sim}\lrt \Gprj \La$ induces the equivalence $$\nu: \Fmod(\Ginj \La) \st{\sim}\lrt  \Fmod (\Gprj \La).$$ So in view of the above lemma, there  is the following duality of triangulated categories
$$\eta: \Fmod (\Gprj \La) \st{\nu}\lrt \Fmod (\Ginj \La) \st{D} \lrt \Fmod((\Ginj \La)^{\op}).$$

Let $\La$ be a virtually Gorenstein algebra. By Theorem \ref{Equi}, $$\D^{\bb}_{\Gp}(\mmod \La) \simeq \D^{\bb}(\Fmod (\Gprj \La)).$$ The same argument as in the proof of Theorem \ref{Thm6.1} can be applied to show that there is an equivalence
$$\D^{\bb}_{\Gi}(\mmod \La) \simeq \D^{\bb}(\Fmod ((\Ginj \La)^{\op}))^{\op}$$
of triangulated categories. We shall use these facts in the proof of the following theorem.

\begin{theorem}\label{ThmSec6}
Let $\La$ be a virtually Gorenstein  algebra. Then there is an equivalence
$$\D_{\Gp}^{\bb}(\mmod \La) \simeq \D_{\Gi}^{\bb}(\mmod \La),$$
of triangulated categories.
\end{theorem}

\begin{proof}
As we mentioned before, there are equivalences
$$\D^{\bb}_{\Gp}(\mmod \La) \simeq \D^{\bb}(\Fmod (\Gprj \La)) \ \  \text{and} \ \ \D^{\bb}_{\Gi}(\mmod \La) \simeq \D^{\bb}(\Fmod ((\Ginj \La)^{\op}))^{\op}$$
of triangulated categories.

Also, the duality  $ \eta:  \Fmod (\Gprj \La)\lrt \Fmod((\Ginj \La)^{\op}) $  can be extended to the duality $\R \eta: \D^{\bb}(\Fmod (\Gprj \La)) \lrt \D^{\bb}(\Fmod((\Ginj \La)^{\op}))$. The proof is now complete.
\end{proof}

\section{Reflection functors}
Let $\CQ$ be a quiver and $i$ be a vertex  of $\CQ$. We let $\sigma_i\CQ$ denote the quiver which is obtained from $\CQ$ by reversing all arrows which start or end at $i$.

It is proved in \cite[I. 5.7]{Hap} that if $\La$ is a field, then $\D^b(\CQ) \simeq \D^b(\sigma_i\CQ)$, where $\D^{\bb}(\CQ)$, resp. $\D^{\bb}(\sigma_i\CQ)$, denotes the bounded derived category of finitely generated representations of $\CQ$, resp. $\sigma_i\CQ$,  over $\La$.

This section is devoted to prove the relative version of this result in terms of the Gorenstein derived categories.
\vspace{0.2cm}

\s A quiver $\CQ$ is a directed graph which is denoted by a quadruple $\CQ=(\CQ_0,\CQ_1,s,t)$, where $\CQ_0$, resp. $\CQ_1$, is the set of vertices, resp. arrows, of $\CQ$ and $s,t: \CQ_1 \rt \CQ_0$ are maps which associate to any arrow $\alpha \in \CQ_1$
its source $s(\al)$ and its target $t(\al)$. A quiver $\CQ$ is called finite if both $\CQ_0$ and $\CQ_1$ are finite sets.
We may consider $\CQ$ as a category: the objects are vertices and morphisms are arrows. A representation of $\CQ$ over a ring $\La$ is a covariant functor $\CM: \CQ \rt \Mod \La$.
We denote the category of all, resp. all finitely generated, representations  of $\CQ$ in $\Mod \La$ by ${\rm Rep}(\CQ,\La)$, resp. ${\rm rep}(\CQ,\La)$. It is known that ${\rm Rep}(\CQ,\La)$ is a Grothendieck category which has enough projective objects. Let $\CM \in {\rm Rep}(\CQ, \La)$ be a representation of $\CQ$. For every vertex $v \in \CQ_0$, $\CM_v$ denotes the module at vertex $v$.
\vspace{0.2cm}

As in the absolute case, we write $\D_{\Gp}(\CQ)$, resp. $\D_{\sg}(\CQ)$,  instead of  $\D_{\Gp}({\rm rep}(\CQ,\La))$, resp. $\D_{\sg}({\rm rep}(\CQ,\La))$. Also, we denote by $\inj \CQ$, resp. $\prj \CQ$, $\Gprj \CQ$, the full subcategory of ${\rm rep}(\CQ, \La)$ formed by all  injective, resp.  projective, Gorenstein projective, representations of $\CQ$.

Let us recall the definition of a pair of reflection functors. For more details consult \cite[3.3]{K08}.
Let $i$ be a sink of $\CQ$. The reflection functor $S_i^+: {\rm Rep}(\CQ, \La) \lrt {\rm Rep}(\CQ,\La)$ is defined as follows: let $\CM$ be an arbitrary representation of $\CQ$. Then $S_i^+(\CM)_v=\CM_v$ for any vertex $v \neq i$, and $S_i^+(\CM)_i$ is the kernel of a map $\bigoplus_{t(a)=i}\CM_{s(a)} \st{\zeta}\rt \CM_i$.

Similarly, if $i$ is a source of $\CQ$, we have the reflection functor $S_i^-: {\rm Rep}(\CQ, \La) \lrt {\rm Rep}(\CQ,\La)$ which is defined by $S_i^-(\CM)_v=\CM_v$ for any $v \neq i$, and $S_i^-(\CM)_i$ is the cokernel of a map
$ \CM_i \rt \bigoplus_{s(a)=i} \CM_{t(a)}.$

\begin{lemma}\label{AHV2}
Let $\La$ be an artin algebra and $\CQ$ be a finite acyclic quiver. Let $i$ be either a sink or a source of $\CQ$. Then there is an equivalence
$$\D^{\bb}(\CQ) \simeq \D^{\bb}(\sigma_i\CQ),$$
of triangulated categories.
\end{lemma}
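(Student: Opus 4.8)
The plan is to reconstruct Happel's reflection-functor argument \cite[I.5.7]{Hap} -- extended in \cite{AHV2} to noetherian rings of finite global dimension -- and to observe that it uses only the combinatorics of $\CQ$ and never any finiteness of $\gldim\La$. First I would reduce to the case that $i$ is a \emph{sink}: if $i$ is a source of $\CQ$, then $i$ is a sink of $\CQ':=\sigma_i\CQ$ and $\sigma_i\CQ'=\CQ$, so the sink case applied to $\CQ'$ gives the result. So assume $i$ is a sink of $\CQ$, hence a source of $\CQ'=\sigma_i\CQ$. As $\CQ$ is finite acyclic, the path ring $\La\CQ$ is again an artin algebra and ${\rm rep}(\CQ,\La)\simeq\mmod\La\CQ$ has enough projectives and injectives; in particular $\D^{\bb}(\CQ)=\D^{\bb}(\mmod\La\CQ)$, and likewise for $\CQ'$.

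Next I would set up the derived reflection functors. Recall that $S_i^+\colon{\rm rep}(\CQ,\La)\lrt{\rm rep}(\CQ',\La)$ is left exact, $S_i^-\colon{\rm rep}(\CQ',\La)\lrt{\rm rep}(\CQ,\La)$ is right exact, and $(S_i^-,S_i^+)$ is an adjoint pair. Let $c_i$ denote the exact functor sending a $\La$-module to the representation concentrated at $i$ with zero structure maps (legitimate because $i$ is a sink of $\CQ$, resp.\ a source of $\CQ'$), and let $e_i$ be evaluation at $i$. There is a natural transformation $\zeta\colon\Phi\Rightarrow c_ie_i$ between exact functors ${\rm rep}(\CQ,\La)\lrt{\rm rep}(\CQ',\La)$, where $\Phi\CM$ agrees with $\CM$ off $i$ and equals $\bigoplus_{t(a)=i}\CM_{s(a)}$ at $i$, with $\Ker\zeta_\CM=S_i^+\CM$ and with $\Coker\zeta_\CM$ (placed at $i$) the only obstruction to exactness of $S_i^+$; dually one has $\xi\colon c_ie_i\Rightarrow\Psi$ with $\Coker\xi_\CN=S_i^-\CN$. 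Hence $S_i^+$ has cohomological dimension $\le 1$ and $S_i^-$ homological dimension $\le 1$, and -- crucially -- the obstruction is merely a $\La$-module, so this holds over any artin $\La$. Therefore $RS_i^+\colon\D^{\bb}(\CQ)\lrt\D^{\bb}(\CQ')$ and $LS_i^-\colon\D^{\bb}(\CQ')\lrt\D^{\bb}(\CQ)$ exist and preserve boundedness (concretely, $RS_i^+$ is the two-term cone construction along $\zeta$ and $LS_i^-$ that along $\xi$), and they inherit the adjunction $LS_i^-\dashv RS_i^+$.

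It then remains to prove that the counit $LS_i^-RS_i^+\lrt\id$ and the unit $\id\lrt RS_i^+LS_i^-$ are isomorphisms. Since both composites are triangulated functors and the stalk complexes generate $\D^{\bb}(\CQ)$ as a triangulated category, it is enough to verify this on modules. Here one invokes the usual dichotomy: for a module $\CM$ on which $\zeta_\CM$ is epic one computes directly that $S_i^-S_i^+\CM\cong\CM$ and that the counit is an isomorphism, while for $\CM=c_i(X)$ with $X\in\mmod\La$ one finds $RS_i^+(c_i(X))\cong c_i'(X)[-1]$ and $LS_i^-(c_i'(X)[-1])\cong c_i(X)$, again with the counit an isomorphism; and every module $\CM$ fits into a functorial short exact sequence $0\rt S_i^-S_i^+\CM\rt\CM\rt c_i(\Coker\zeta_\CM)\rt 0$ whose outer terms are of these two kinds, so the counit is an isomorphism throughout $\D^{\bb}(\CQ)$. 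The unit is handled symmetrically over $\CQ'$. Thus $RS_i^+$ and $LS_i^-$ are mutually quasi-inverse triangle equivalences $\D^{\bb}(\CQ)\simeq\D^{\bb}(\sigma_i\CQ)$.

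The step I expect to be the main obstacle is the middle one: establishing, over an arbitrary artin base $\La$, that the reflection functors have cohomological amplitude one and that the resulting derived functors form a genuine (quasi-inverse) adjoint pair. Over a field this is classical, but here one must carry $\La$-module structures through every identification and, in particular, treat the whole family of exceptional objects $c_i(X)$, $X\in\mmod\La$ -- which over a field collapses to the single simple module $S_i$ at the vertex $i$ -- rather than a single exceptional module. Once the functorial exact sequences above are in place, the remainder is routine bookkeeping.
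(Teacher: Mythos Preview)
Your proposal is correct, but it takes a genuinely different route from the paper's own proof. You construct the derived reflection functors $RS_i^+$ and $LS_i^-$ directly, verify they have cohomological amplitude one via the explicit two-term description along $\zeta$ and $\xi$, and then check the unit and counit on modules using the torsion-pair dichotomy between modules with $\zeta_\CM$ epic and modules of the form $c_i(X)$. This works over an arbitrary artin base for exactly the reason you isolate: the ``exceptional'' piece is not a single simple but the whole family $c_i(X)$, and your functorial short exact sequence $0\to S_i^-S_i^+\CM\to\CM\to c_i(\Coker\zeta_\CM)\to 0$ handles it uniformly.

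The paper instead argues indirectly. Citing \cite[Theorem 3.2.7]{AHV2}, it observes that $S_i^+$ restricts to an equivalence $\K^{\bb}(\inj\CQ)\to\K^{\bb}(\inj\sigma_i\CQ)$ when $i$ is a sink (and dually $S_i^-$ on projectives when $i$ is a source); then it composes with the Nakayama equivalence $\K^{\bb}(\prj)\simeq\K^{\bb}(\inj)$, which is available over any artin algebra, to obtain $\K^{\bb}(\prj\CQ)\simeq\K^{\bb}(\prj\sigma_i\CQ)$; finally it invokes Rickard's Morita theorem \cite{Ric} to deduce $\D^{\bb}(\CQ)\simeq\D^{\bb}(\sigma_i\CQ)$. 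The trade-off: the paper's argument is shorter and modular, and it delivers the equivalence of perfect complexes directly---which is precisely what the subsequent corollaries (on $\D_{\sg}$ and $\D^{\bb}_{\Gp}$) consume---at the cost of importing Rickard's theorem as a black box. Your approach is more self-contained and yields the explicit quasi-inverse functors, but requires carrying out the unit/counit computation in full.
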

\begin{proof}
Following similar argument as in \cite[Theorem 3.2.7]{AHV2}, we have the following two equivalences
$$\K^{\bb}(\inj \CQ) \st{S_i^+} \lrt \K^{\bb}(\inj \sigma_i\CQ)$$
$$\K^{\bb}(\prj \CQ) \st{S_i^-}\lrt \K^{\bb}(\prj \sigma_i\CQ),$$
where in the first equivalence $i$ is a sink and in the second one $i$ is a source of $\CQ$.

On the other hand, for any artin algebra, the Nakayama functor induces an equivalence between the category of finitely generated projectives and finitely generated injectives. Since $\La \CQ$ and $\La \sigma _i \CQ$ are artin algebras, there exist equivalences $\inj \CQ \simeq\prj \CQ$ and $\inj \sigma_i\CQ \simeq \prj \sigma_i\CQ$. These equivalences can be extended, naturally, to the following equivalences
 $$\K^{\bb}(\inj \CQ) \simeq \K^{\bb}(\prj \CQ) \ \ \ \text{ and } \ \ \ \K^{\bb}(\inj \sigma_i \CQ) \simeq \K^{\bb}(\prj \sigma_i \CQ).$$
 So, in both cases, we have an equivalence
$$\K^{\bb}(\prj \CQ) \lrt \K^{\bb}(\prj \sigma_i\CQ)$$
of triangulated categories. By \cite[Theorem 6.4 and Proposition 8.2]{Ric}, it is equivalent to the existence of following triangulated equivalence
$$\D^{\bb}(\CQ) \st{\sim}\lrt \D^{\bb}(\sigma_i\CQ).$$
\end{proof}

As a direct consequence of the above lemma we have the following corollaries.
\begin{corollary}\label{Cor1}
Let $\La$ be an artin algebra and $\CQ$ be a finite acyclic quiver. Then there is an equivalence
$$\D_{\sg}(\CQ) \cong \D_{\sg}(\sigma_i\CQ),$$
of triangulated categories.
\end{corollary}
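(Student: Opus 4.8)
The plan is to push the equivalence of Lemma~\ref{AHV2} down to the Verdier quotients that define the two singularity categories. Recall that, by definition, $\D_{\sg}(\CQ)=\D^{\bb}({\rm rep}(\CQ,\La))/\K^{\bb}(\prj\CQ)$ and $\D_{\sg}(\sigma_i\CQ)=\D^{\bb}({\rm rep}(\sigma_i\CQ,\La))/\K^{\bb}(\prj\sigma_i\CQ)$, with $\K^{\bb}(\prj\CQ)$ and $\K^{\bb}(\prj\sigma_i\CQ)$ regarded as thick subcategories of the ambient bounded derived categories via the canonical fully faithful embeddings. Since a triangle equivalence carrying one thick subcategory onto another induces, by the universal property of the Verdier quotient, a triangle equivalence of the quotients, the whole statement reduces to the claim that the triangle equivalence $F\colon\D^{\bb}(\CQ)\st{\sim}{\lrt}\D^{\bb}(\sigma_i\CQ)$ of Lemma~\ref{AHV2} restricts to an equivalence $\K^{\bb}(\prj\CQ)\st{\sim}{\lrt}\K^{\bb}(\prj\sigma_i\CQ)$.

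First I would observe that this restriction property is already part of the construction of $F$: in the proof of Lemma~\ref{AHV2}, the functor $F$ is constructed, via Rickard's theorem \cite[Theorem 6.4 and Proposition 8.2]{Ric}, out of a triangle equivalence $\K^{\bb}(\prj\CQ)\st{\sim}{\lrt}\K^{\bb}(\prj\sigma_i\CQ)$ between the categories of perfect complexes, and a derived equivalence obtained in this way agrees with the given functor on perfect complexes; in particular it maps $\K^{\bb}(\prj\CQ)$ onto $\K^{\bb}(\prj\sigma_i\CQ)$. If one prefers an argument that does not rely on the shape of $F$, one can instead appeal to the fact that $\K^{\bb}(\prj A)$ is an intrinsic subcategory of $\D^{\bb}(\mmod A)$ for every artin algebra $A$: an object $X$ belongs to $\K^{\bb}(\prj A)$ exactly when for each $Y\in\D^{\bb}(\mmod A)$ there is an integer $N$ with $\Hom_{\D^{\bb}(\mmod A)}(X,Y[n])=0$ for all $|n|\geq N$. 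The forward direction is clear, and the converse follows by taking $Y$ to be the direct sum of the simple $A$-modules and reading off a minimal projective resolution of $X$, whose $(-n)$-th term is detected by the functors $\Hom_A(-,S)$ with $S$ simple. Since this description is preserved by any triangle equivalence, $F$ must match the two subcategories of perfect complexes. Applying either version with $A=\La\CQ$ and $A=\La\sigma_i\CQ$ establishes the claim.

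Granting the claim, the corollary drops out: $F$ descends to a triangle equivalence
$$\D_{\sg}(\CQ)=\D^{\bb}(\CQ)/\K^{\bb}(\prj\CQ)\st{\sim}{\lrt}\D^{\bb}(\sigma_i\CQ)/\K^{\bb}(\prj\sigma_i\CQ)=\D_{\sg}(\sigma_i\CQ).$$
I do not expect any real obstacle here; the only point worth a line in the write-up is the compatibility of the equivalence of Lemma~\ref{AHV2} with perfect complexes, which — as indicated above — is automatic because that equivalence arises, through Rickard's theorem, from an equivalence already living at the level of perfect complexes.
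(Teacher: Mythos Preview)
Your proposal is correct and follows the same route as the paper, only more explicitly: the paper's proof consists of the single observation that derived equivalent noetherian rings have equivalent singularity categories, together with Lemma~\ref{AHV2}, whereas you unpack precisely why that observation holds (the equivalence of Lemma~\ref{AHV2} preserves $\K^{\bb}(\prj-)$, either by construction via Rickard's theorem or by the intrinsic characterisation of perfect complexes, and hence descends to the Verdier quotients).
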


\begin{proof}
Observe that if two noetherian rings are derived equivalent, then they have the same singularity category. So Lemma \ref{AHV2} implies the result.
\end{proof}

\begin{corollary}\label{GP}
Let $\La$ be a Gorenstein algebra and $\CQ$ be a finite acyclic quiver. Then there is an equivalence
$$\underline{\Gp}\mbox{-}\CQ \simeq \underline{\Gp}\mbox{-}\sigma_i\CQ,$$
of triangulated categories.
\end{corollary}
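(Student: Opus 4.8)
The plan is to deduce the statement from Corollary~\ref{Cor1} together with the Buchweitz--Happel identification, for a Gorenstein algebra, of its singularity category with the stable category of Gorenstein projective modules. Since $\CQ$ is a finite acyclic quiver and $\La$ an artin algebra, the category ${\rm rep}(\CQ,\La)$ is equivalent to the module category $\mmod\La\CQ$ of the path algebra $\La\CQ$, which is again an artin algebra; under this equivalence the Gorenstein projective representations of $\CQ$ correspond to the finitely generated Gorenstein projective $\La\CQ$-modules, so $\underline{\Gp}\mbox{-}\CQ$ is triangle-equivalent to the stable category of the latter, and likewise $\underline{\Gp}\mbox{-}\sigma_i\CQ$ for $\sigma_i\CQ$.

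The crucial input is that $\La\CQ$ is itself a Gorenstein artin algebra whenever $\La$ is Gorenstein and $\CQ$ is finite acyclic, and this is the step I expect to be the main obstacle; once it is in hand, everything else is formal. It follows from the well-known fact that the operation $\La\rightsquigarrow\La\CQ$ preserves finiteness of the self-injective dimension on both sides, which in turn can be seen by choosing an admissible linear order on $\CQ_0$ and realising $\La\CQ$ as an iterated one-point extension and coextension of $\La$ along projective bimodules. Since $\sigma_i\CQ$ is again finite and acyclic when $i$ is a sink or a source, both $\La\CQ$ and $\La\sigma_i\CQ$ are Gorenstein.

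Finally, the Buchweitz--Happel theorem \cite{Buc, Hap91} recalled above yields triangle equivalences $\D_{\sg}(\CQ)\simeq\underline{\Gp}\mbox{-}\CQ$ and $\D_{\sg}(\sigma_i\CQ)\simeq\underline{\Gp}\mbox{-}\sigma_i\CQ$, while Corollary~\ref{Cor1} provides $\D_{\sg}(\CQ)\cong\D_{\sg}(\sigma_i\CQ)$. Composing,
\[\underline{\Gp}\mbox{-}\CQ\;\simeq\;\D_{\sg}(\CQ)\;\cong\;\D_{\sg}(\sigma_i\CQ)\;\simeq\;\underline{\Gp}\mbox{-}\sigma_i\CQ,\]
which is the asserted equivalence of triangulated categories. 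The only non-routine ingredient is the passage from the Gorensteinness of $\La$ to that of $\La\CQ$; the rest is bookkeeping with results already established.
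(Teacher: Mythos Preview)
Your proof is correct and follows essentially the same route as the paper: invoke the Buchweitz--Happel equivalence $\D_{\sg}(R)\simeq\underline{\Gp}\mbox{-}R$ for Gorenstein $R$, note that $\La\CQ$ and $\La\sigma_i\CQ$ are Gorenstein because $\La$ is and the quivers are finite acyclic, and then apply Corollary~\ref{Cor1}. The paper simply asserts the Gorensteinness of $\La\CQ$ as known, whereas you sketch a justification via iterated one-point (co)extensions; this is a helpful addition but not a different strategy.
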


\begin{proof}
By \cite{Buc, Hap91}, $\D_{\sg}(R) \simeq \underline{\Gp}\mbox{-}R$, provided $R$ is a Gorenstein ring. Since $\La$ is Gorenstein and $\CQ$, resp. $\sigma_i\CQ$, is finite and acyclic, $\La\CQ$, resp. $\La\sigma_i\CQ$, is Gorenstein. Now, the result follows from Corollary \ref{Cor1}.
\end{proof}

\begin{corollary}\label{Gor-Ref}
Let $\La$ be a Gorenstein algebra and $\CQ$ be a finite acyclic quiver. Then $\K^{\bb}(\Gprj \CQ)\simeq \K^{\bb}(\Gprj \sigma_i\CQ)$. In particular, we have an equivalence $\D^{\bb}_{\Gp}(\CQ) \simeq \D^{\bb}_{\Gp}(\sigma_i\CQ)$.
\end{corollary}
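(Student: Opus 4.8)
The plan is to deduce the corollary from three facts: that the path algebras involved are Gorenstein artin algebras, that over a Gorenstein artin algebra $R$ one has a triangle equivalence $\K^{\bb}(\Gprj R)\simeq\D^{\bb}_{\Gp}(\mmod R)$, and that the derived equivalence provided by Lemma~\ref{AHV2} can be promoted to a Gorenstein derived equivalence. First I would note that, since $\La$ is Gorenstein and $\CQ$ (respectively $\sigma_i\CQ$) is finite and acyclic, the algebra $\La\CQ$ (respectively $\La\sigma_i\CQ$) is again a Gorenstein artin algebra --- this is exactly the observation already used in the proof of Corollary~\ref{GP}.

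The technical core is the second fact. The canonical functor $\K^{\bb}(\Gprj R)\lrt\D^{\bb}_{\Gp}(\mmod R)$ is fully faithful by \cite[Lemma 3.5]{GZ} (recalled earlier, in the discussion of CM-free categories), so it only remains to prove that it is dense. Since $R$ is Gorenstein there is an integer $d$, for instance $d=\id_R R$, with $\Gpd_R M\leq d$ for every finitely generated $R$-module $M$ (see \cite[Theorem 12.3.1]{EJ}); hence every bounded complex $\A$ of finitely generated $R$-modules has finite Gorenstein projective dimension. Taking a $\Gp$-proper Gorenstein projective resolution $\G'\lrt\A$ and truncating $\G'$ brutally far enough to the left, the syzygy appearing at the truncation point is Gorenstein projective --- the same mechanism used in the proofs of Lemma~\ref{Gor.ness} and Proposition~\ref{Equi}. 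This yields a bounded complex $\G\in\K^{\bb}(\Gprj R)$ together with a $\Gp$-quasi-isomorphism $\G\lrt\A$, so $\A\cong\G$ in $\D^{\bb}_{\Gp}(\mmod R)$ and density follows. (Equivalently, this says the Gorenstein singularity category of a Gorenstein ring vanishes, in the spirit of Gao--Zhang's characterization of Gorenstein rings.)

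Finally, Lemma~\ref{AHV2} shows that $\La\CQ$ and $\La\sigma_i\CQ$ are derived equivalent; since both are Gorenstein by the first step, \cite[Corollary 5.2.4]{AHV} promotes this to a Gorenstein derived equivalence $\D^{\bb}_{\Gp}(\mmod\La\CQ)\simeq\D^{\bb}_{\Gp}(\mmod\La\sigma_i\CQ)$. Chaining this with the equivalence of the second fact gives
\[\K^{\bb}(\Gprj\CQ)\ \simeq\ \D^{\bb}_{\Gp}(\mmod\La\CQ)\ \simeq\ \D^{\bb}_{\Gp}(\mmod\La\sigma_i\CQ)\ \simeq\ \K^{\bb}(\Gprj\sigma_i\CQ),\]
which is the first assertion; the ``in particular'' clause is simply the middle equivalence.

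The hard part will be the density step: one must check carefully that brutal truncation of a $\Gp$-proper Gorenstein projective resolution over a Gorenstein algebra still produces a complex $\Gp$-quasi-isomorphic to the original and lying in $\K^{\bb}(\Gprj R)$, and also that the hypotheses of \cite[Corollary 5.2.4]{AHV} --- a derived equivalence together with Gorensteinness of both algebras --- are genuinely in force. A more self-contained but considerably more laborious alternative would mirror the proof of Lemma~\ref{AHV2}, showing directly that the reflection functors $S_i^{+}$ and $S_i^{-}$ restrict to triangle equivalences $\K^{\bb}(\Gprj\CQ)\simeq\K^{\bb}(\Gprj\sigma_i\CQ)$; that would require an explicit description of the Gorenstein projective representations of $\CQ$ over $\La$ and a verification that these reflection functors preserve them, which seems delicate.
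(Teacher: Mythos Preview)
Your proposal is correct and follows essentially the same route as the paper. The paper invokes Lemma~\ref{AHV2} for the ordinary derived equivalence, notes that $\La\CQ$ and $\La\sigma_i\CQ$ are Gorenstein, then cites \cite[Theorem 5.2.3]{AHV} to obtain $\K^{\bb}(\Gprj\CQ)\simeq\K^{\bb}(\Gprj\sigma_i\CQ)$ directly, and finally cites \cite[Corollary 3.8]{GZ} (which is precisely the equivalence $\K^{\bb}(\Gprj R)\simeq\D^{\bb}_{\Gp}(\mmod R)$ for Gorenstein $R$) to pass to the Gorenstein derived categories. You run the same argument with the two conclusions swapped: you first promote the derived equivalence to a Gorenstein derived equivalence via \cite[Corollary 5.2.4]{AHV}, and then use the bridge $\K^{\bb}(\Gprj R)\simeq\D^{\bb}_{\Gp}(\mmod R)$---for which you supply your own density argument rather than citing \cite[Corollary 3.8]{GZ}---to recover the homotopy-category statement. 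The ingredients and logic are the same; only the order and the choice of citation (5.2.3 versus 5.2.4 in \cite{AHV}) differ.
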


\begin{proof}
By Lemma \ref{AHV2}, $\D^{\bb}(\CQ) \simeq \D^{\bb}(\sigma_i\CQ)$. If we let $\La$ to be Gorenstein, then so are $\La \CQ$ and $\La\sigma_i\CQ$. So  \cite[Theorem 5.2.3]{AHV} implies that  $\K^{\bb}(\Gprj \CQ) \simeq \K^{\bb}(\Gprj \sigma_i\CQ)$. Moreover,  Corollary 3.8 of \cite{GZ} follows the equivalence
$$\D^{\bb}_{\Gp}(\CQ) \simeq \D^{\bb}_{\Gp}(\sigma_i\CQ),$$
of triangulated categories.
\end{proof}

\begin{corollary}
Let $\La$ be a Gorenstein algebra and $T$ be a finite tree. If   $\CQ_1$ and $\CQ_2$ are two  acyclic quivers with the same underlying graph as $T$, then
\begin{itemize}
\item[$(i)$] $\underline{\Gp}\mbox{-}\CQ_1 \simeq \underline{\Gp}\mbox{-}\CQ_2$.

\item[$(ii)$] $\D_{\Gp}^{\bb}(\CQ_1) \simeq \D_{\Gp}^{\bb}(\CQ_2)$.

\end{itemize}
\end{corollary}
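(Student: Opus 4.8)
The plan is to derive both statements from the single-reflection equivalences already in hand --- Corollary \ref{GP} for (i) and Corollary \ref{Gor-Ref} for (ii) --- the only genuinely new input being a combinatorial fact about orientations of a tree.

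First I would recall the classical observation, essentially contained in \cite{BGP}, that for a finite tree $T$ any two quivers with underlying graph $T$ are joined by an admissible chain of reflections at sinks or sources: there are vertices $i_1,\dots,i_r$ and quivers $\CP_0,\dots,\CP_r$, each with underlying graph $T$, with $\CP_0=\CQ_1$, $\CP_r=\CQ_2$ and $\CP_j=\sigma_{i_j}\CP_{j-1}$, where $i_j$ is a sink or a source of $\CP_{j-1}$ for every $j$. Since $T$ carries no undirected cycle, each $\CP_j$ is automatically acyclic, and reflecting an orientation of $T$ at a sink or a source again produces an orientation of $T$; thus the whole chain stays within the class of finite acyclic quivers with underlying graph $T$. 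For completeness I would include a short proof of this lemma by induction on the number of vertices, reducing it to the auxiliary statement that by iterated reflection at sinks one can turn any prescribed vertex into a sink; the latter follows by a standard argument that pushes the misoriented edges towards the leaves of $T$, where they disappear, a suitable integer potential strictly decreasing at each step.

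With the lemma available, the conclusion is a finite composition. As $\La$ is Gorenstein and each $\CP_j$ is finite, the path algebras $\La\CP_j$ are again Gorenstein artin algebras --- precisely the observation used in the proof of Corollary \ref{Gor-Ref}. Applying Corollary \ref{GP} to each consecutive pair $(\CP_{j-1},\CP_j)$ gives triangle equivalences $\underline{\Gp}\mbox{-}\CP_{j-1}\simeq\underline{\Gp}\mbox{-}\CP_j$; composing them over $j=1,\dots,r$ yields $\underline{\Gp}\mbox{-}\CQ_1\simeq\underline{\Gp}\mbox{-}\CQ_2$, which is (i). In the same way Corollary \ref{Gor-Ref} gives equivalences $\D^{\bb}_{\GP}(\CP_{j-1})\simeq\D^{\bb}_{\GP}(\CP_j)$, and their composition proves (ii). The only point of real substance is the combinatorial lemma, and it is entirely standard; once it is stated with each reflection taken at a sink or a source --- so that Lemma \ref{AHV2}, and hence Corollaries \ref{GP} and \ref{Gor-Ref}, applies at every step --- nothing remains but to chain the equivalences.
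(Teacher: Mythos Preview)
Your proposal is correct and follows essentially the same approach as the paper: the paper invokes \cite[I.~5.7]{Hap} for the combinatorial fact that any two orientations of a finite tree are connected by a finite chain of sink/source reflections, and then applies Corollaries \ref{GP} and \ref{Gor-Ref} at each step exactly as you do. The only difference is that you spell out the combinatorial lemma (and sketch a proof) where the paper simply cites it.
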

\begin{proof}
By \cite[I. 5.7]{Hap}, $\CQ_2$ can be obtained from $\CQ_1$ by a finite sequence of reflection functors. Now, the equivalences follow from Corollaries  \ref{GP} and \ref{Gor-Ref}.
\end{proof}

\section*{Acknowledgments}
The authors would like to thank the referees for their useful hints and comments that improved our exposition. The first author also thanks the IMU-Simons Foundation Travel Fellowship for providing a travel grant to visit MPIM. We also thank the Center of Excellence for Mathematics (University of Isfahan).

\end{document}